\newtheorem{theorem}{Theorem}[section]
\newtheorem{proposition}[theorem]{Proposition}
\newtheorem{remark}[theorem]{Remark}
\newcommand{\Z}{\mathbb{Z}}
\newcommand{\R}{\mathbb{R}}
\newcommand{\X}{\mathbb{X}}
\newcommand{\Hg}{\mathrm{H}}
\newcommand{\rto}{\rightarrow}
\newcommand{\kk}{\mathrm{k}}
\newcommand{\set}[1]{\{\,#1\,\}}
\def \HH{{\mathcal P}}
\def \LL{{\mathcal L}}
\def \DD{{\mathcal D}}
\def \FF{{\mathcal F}}
\def \CC{{\mathcal C}}
\def \FF{{\mathcal F}}
\def \BB{{\mathcal B}}
\def \OO{{\mathcal O}}
\DeclareMathOperator{\dom}{dom}
\begin{document}

\title[Variable sets over an algebra of lifetimes]{\textbf{Variable sets over an algebra of lifetimes: \\ a contribution of lattice theory to the study of computational topology} \\ \medskip AAA88 Proceedings}

\author[JPC]{Jo\~ao Pita Costa *} 
\author[MVJ]{Mikael Vejdemo Johansson * **}
\author[P\v S]{Primo\v z \v Skraba *} 

\email{\underline{joao.pitacosta@ijs.si}, mvj@kth.se, primoz.skraba@ijs.si}

\address{* Laboratory of Artificial Intelligence, In\v stitut Jozef \v Stefan, Slovenia}
\address{** Computer Vision and Active Perception Laboratory, KTH Royal Institute of Technology, Sweden}

\date{\today}

\maketitle


%
\begin{abstract}
A topos theoretic generalisation of the category of sets allows for modelling spaces which vary according to time intervals.
Persistent homology, or more generally persistence, is a central tool in topological data analysis, which examines the structure of data through topology. 
The basic techniques have been extended in several different directions, encoding topological features by so called barcodes or equivalently persistence diagrams. 
The set of points of all such diagrams determines a complete Heyting algebra that can explain aspects of the relations between  persistent bars through the algebraic properties of its underlying lattice structure. 
In this paper, we investigate the topos of sheaves over such algebra, as well as discuss its construction and potential for a generalised simplicial homology over it.
In particular, we are interested in establishing a topos theoretic unifying theory for the various flavours of persistent homology that have emerged so far, providing a global perspective over the algebraic foundations of applied and computational topology. 

\medskip
\textbf{Keywords:} Lattice theory, computational algebraic topology, topoi, sheaves over locales, persistent homology, persistence diagrams, semi-simplicial homology, topological data analysis.  

\medskip
\textbf{AMS Mathematics Subject Classification:} 03G30, 06D22, 18B25

\end{abstract}


\section*{Introduction}
\label{Introduction}

Persistent homology is currently an area of research in computational and applied topology. 
The fundamental idea is to geometerize homology using multi-scale representations of spaces. 
One of the most prominent applications is \emph{topological data analysis}, studying the topology of point clouds as a route for approximating topological features of an underlying geometric object generating the samples. 
Since the identification of persistent homology as the homology of graded $k[t]$-modules in \cite{ZC05}, an algebraic approach has yielded immense benefits both in the innovation of new methods for topological data analysis and for algorithmic development. 
We identify the development of zigzag persistent homology \cite{carlsson_zigzag_2009}, as well as progress made on multidimensional persistence \cite{carlsson_theory_2009} as coming from fundamentally algebraic considerations.  
However, the approach of using more complicated rings, as in \cite{carlsson_theory_2009}, to model more general notions of persistence has raised a number of obstacles. 
In this paper, we adopt a different approach.  
We establish a foundation theory describing a general unifying framework for persistence with the construction of the appropriate topos of variable sets over an algebra of lifetime intervals $\HH$.
It is based on a theory of variable sets constructed over a lattice of time-like intervals of real numbers.
This will permit us to compute homology over a category with similar structure to the category of sets, that is \emph{parametrised} by the lifetime intervals leaving in the algebra $\HH$. 
The intuition is that  we can develop a set theory where all the sets have encoded a multiplicity of lifetimes determined by $\HH$.
In this setting, the topological features of a shape have lifetimes themselves. 
The goal of this approach is to place all flavours of persistence into a common framework, where some parameter of the framework determines the shape of the theory and the category in which analyses live.
For approachable introductions to the field and its applications, we recommend \cite{C09}, \cite{Ed10} and \cite{Gh08}, and for an accessible introduction to algebraic topology \cite{Hat00}.  

A sheaf of sets can be seen as a functor that is able to glue compatible local information providing us with a global perspective.  
A category of sheaves of sets is thus a collection of such functors and natural transformations between them.
On the other hand, the topos we are interested in is the category of sheaves over $\HH$.
The category of sets, a base of most of mathematical and mathematic flavoured constructions, can be generalised by such topos: 
it is the topos of sheaves of sets on the one point space, $\{ *\}.$
The usefulness of sheaves for probabilistic reasoning in quantum mechanics was recognised by Abramsky and Brandenburger in \cite{Abr11}, where they were able to generalise the no-go theorem by Kochen-Specker using a sheaf-based representation modelling contextually in quantum mechanics by sheaf-theoretic obstructions.
Based on this, D\"{o}ring and Isham establish a topos-theoretical foundation for quantum physics in \cite{Dor08} that is also described formally in the recent book of Flori on topos quantum theory in \cite{Flo13}. 
The inspiration for our approach is to some extent rooted in the presentation of \emph{time sheaves} in the exposition by Barr and Wells \cite{Bar95}, where sheaves of sets are described as sets with a particular shape given by a temporally varying structure. 
The shape corresponds to the shape of the underlying site, which also describes the shape of the available \emph{truth values} for the corresponding logic. 
Classical logic and set theory correspond to having two discrete truth values, while fuzzy logic corresponds to a continuum of truth values encoding reliability of a statement (e.g.: fuzzy sets are sheaves over $[0,1]$). 
Under this perspective, the persistent approach would encode truth as valid over some regions of a persistence parameter, but not other. 
In \cite{Bar95}, the authors give examples of time-like structures modelled by sheaf theory: over the total order $\R$ (sets have elements that arise and stay); and over intervals in $\R$ (sets have elements that are born and die).
The idea of applying sheaves to encode the shape of persistent homology is not itself new: it has been approached independently by McPherson and Patel \cite{Pat11}, and by Ghrist and Curry in \cite{Cur13} and \cite{Cur14}.
Though, this research provides us with an approach that can encode the various flavours of persistent homology through the internal logic of the persistence. 
We believe that topos-theoretic perspective can provide such a unifying theory.

The fundamental observation is that we have seen numerous cases lately where the \emph{shape} of a persistence theory matters; there has been the \emph{classical persistent homology} as defined in \cite{edelsbrunner_topological_2002}, \emph{multi-dimensional persistence} as defined in \cite{carlsson_theory_2009} and \emph{zig-zag persistence} as defined in \cite{carlsson_zigzag_2009}. 
In all of these cases, there is a sense of shape to the theory, embodied by a choice of algebra and module category that reflects the kinds of information we can extract from the method.
The similarities in definitions and in algorithms suggest to us that all three should be instances of a unifying theory; and indeed, one suggests itself directly from the definitions: in all three cases, we study homology for graded modules over graded rings (see \cite{ZC05, carlsson_theory_2009, carlsson_zigzag_2009, VJ12} for details). 
However, this similarity steps in on the algebraic plane; we are interested in a unifying theory that connects the underlying topological cases as well.
In \cite{VJ12}, M. Vejdemo-Johansson reviews in more detail the algebraic foundations of persistent homology, and presents the idea of a topos-based approach as a potential unifying language for these various approaches. 
In this paper we will show how to encode the lifetimes of topological features with an Heyting algebra $\HH$ determined in the space of all possible points in a persistence diagram. 
Then, we generalise the underlying set theory by the construction of a topos of sheaves over $\HH$ providing the basis for our unifying theory.
Such a topos can be seen as a category of sets with lifetime where things exist at some point and after a while might cease to exist.
Later, we discuss the computation of simplicial and semi-simplicial homology over such category of sets with lifetimes.  
In such a setting, the vertices of the simplexes have themselves lifetimes encoded in the underlying algebra $\HH$.
Hence, this seems to be a more appropriate universe to deal with problems in unified theory of persistence.


\section{Preliminaries}
\label{Preliminaries}

A \textbf{lattice} is a poset for which all pairs of elements have infimum and supremum.
Whenever every subset of a lattice $L$ has a supremum and a infimum, $L$ is 
named a \textbf{complete lattice}.
Every total order is a lattice. Though, not all of them are complete: $[0,1[$ with the usual order does not include the supremum of all its elements. 
A lattice $L$ can be seen as an algebraic structure $(L;\wedge ,\vee)$ with two associative, commutative and idempotent operations $\wedge$ and $\vee$ satisfying the absorption property, i.e., for all $x,y,z\in L$, $x\wedge (x\vee y)=x=x\vee (x\wedge y)$.
Moreover, $x\leq y$ if and only if $x\wedge y=x$ if and only if $x\vee y=y$, for all $x,y\in L$, providing the equivalence between the algebraic structure of a lattice $L$ and its ordered structure.
Given a lattice $L$, an \textbf{ideal} $I$ of $L$ is a nonempty subset of $L$ closed to $\vee$ such that, for all $x\in I$ and $y\in L$, $y\leq x$ implies $y\in I$. 
A \textbf{filter} is defined dually.
The ideal [filter] generated by a singleton $\set{x}$, with $x\in L$, is called \textbf{principal} ideal [filter] and denoted by $\downarrow x$ [$\uparrow x$]. 
An ideal [filter] $I$ of $L$ distinct from $L$ is called \textbf{proper}.
A proper ideal $I$ is \textbf{maximal} in $L$ if there is no other ideal in $L$ containing $I$. 
A proper ideal is \textbf{prime} if, for all $x,u,v\in L$, $x=u\wedge v$ implies $x=u$ or $x=v$.
A prime filter is defined dually. 
In fact, $I$ is a prime ideal if and only if $L/I$ is a prime filter.
An element $x$ of $L$ is \textbf{prime} if $\downarrow x$ is a principal prime ideal. 
An element $x$ of a lattice $L$ is \textbf{join-irreducible} if for all $p,q\in L$ such that $x=p\vee q$, we have $x=p$ or $x=q$.
In a distributive lattice $L$, a nonzero element $x\in L$ is join-irreducible if and only if $L\backslash \uparrow x$ is a prime ideal.
Dually, a nonzero element $x\in L$ is meet-irreducible if and only if $L\backslash \downarrow x$ is a prime filter (cf. \cite{Gr71}).

A lattice $L$ is \textbf{distributive} if, for all $x,y,z\in S$, it satisfies $x\wedge (y\vee z)=(x\wedge y)\vee (x\wedge z)$.
A \textbf{Boolean algebra} is a distributive lattice with a unary operation $\neg$ 
and nullary operations $\perp$ and $\top$ such that for all elements $x\in L$, $x\vee \perp = x$ and $x\wedge \top=x$; $x\vee \neg x = \top$ and  $x\wedge \neg x = \perp$.
A bounded lattice $L$ is a \textbf{Heyting algebra} if, for all $a,b\in  L$ there is a greatest element $x\in L$ such that $a\wedge x\leq b$. 
This element is the \textbf{relative pseudo-complement} of $a$ with respect to $b$ denoted by $a\Rightarrow b$. 
Please notice that we will distinguish the notation of this operation from the notation of logic implication, denoting the latter by a long right arrow $\Longrightarrow$.
A subalgebra of an Heyting algebra is thus closed to the usual lattice operations $\wedge $ and $\vee$, and to $\Rightarrow$. 
A homomorphism between Heyting algebras must preserve both lattice operations as well as the implication operation and both top and bottom elements. 
All the finite nonempty total orders (that are bounded and complete) constitute Heyting algebras, where $a\Rightarrow b$ equals $b$ whenever $a>b$, and $\top$ otherwise.
Every Boolean algebra is a Heyting algebra, with $a\Rightarrow b$ given by $\neg a \vee b$.
The lattice of open sets of a topological space $X$ forms a Heyting algebras under the operations of union $\cup$, empty set $\emptyset$, intersection $\cap$, whole space $X$, and the implication operation $U\Rightarrow V= \text{ interior of } (X - U)\cup V$.
A \textbf{complete Heyting algebra} is a Heyting algebra $\HH$ which constitutes a complete lattice.
It can also be characterised as any complete lattice satisfying the \textbf{infinite distributive law}, i.e., for all $x\in \HH$ and any family $\{ y_i\}_{i\in I}$ of elements of $\HH$, $x\wedge \bigvee_{i\in I}y_{i}=\bigvee_{i\in I}(x\wedge y_{i})$, with the implication operation given by $x \Rightarrow y = \bigvee \set{z\in \HH \mid z \wedge x \leq y}$, for all $x,y,z\in \HH$.
%

Given a complete Heyting algebra $\HH$ and a contravariant functor $\FF : \HH^{op} \rightarrow Set$ over the category of sets,  
a \textbf{compatible family} in $F$ is a family of elements in $\HH$ such that for each pair $x_i$ and $x_j$ of the restrictions of $s_i$ and $s_j$ agree on the overlaps, i.e., $s_{i \mid x_i\cap x_j} = s_{j \mid x_i\cap x_j}$.
Moreover, $F$ is a \textbf{sheaf} if, given $x = \vee_{i\in I} x_i$ in $\HH$ and $\{s_i \in F(x_i)\}_{i\in I}$ a compatible family in $F$, there exists a unique element $s \in F(x)$ such that for each index $i \in I$, $s_{\mid x_i}= s_i$.
Equivalently $\FF$ is a sheaf if it satisfies the following two conditions:
\begin{itemize}
\item[(i)] Given $x\in \HH$, if $(x_i)_{i\in I}$ is a family of elements in $\HH$ such that $\bigvee_{i\in I} x_i = x $, and if $s,t \in F(x)$ are such that $s_{\mid x_i} = t_{\mid x_i}$ for each $x_i$, then $s = t$ (we then call $\FF$ a \textbf{separated presheaf}); 
\item[(ii)] Given $x\in \HH$, if $(x_i)_{i\in I}$ is a family of elements in $\HH$ such that $\bigvee_{i\in I} x_i = x $, every compatible family $\{s_i \in F(x_i)\}_{i\in I}$ can be "glued" into a section $s \in F(x)$ such that $s_{\mid x_i}= s_i$ for each $i\in I$. 
\end{itemize}

The first condition is usually called \textbf{Locality} while the second is called \textbf{Gluing}.
By the first condition, $s$ is unique. 
Thus, both conditions together state that compatible sections can be uniquely glued together. 
For any objects $X$ and $Y$ in a category $\CC$ with all binary products with $Y$, an \textbf{exponential} object $Y^X$ is an object of $\CC$ equipped with an evaluation map $ev: Y^X \times X \to Y$ such that for any object $Z$ and map $e: Z \times X \to Y$ there exists a unique map $u: Z \to Y^X$ such that $Z \times X \stackrel{u \times id_X}\to Y^X \times X \stackrel{ev}\to Y$ equals the map $e$.
Whenever existing, it is unique up to unique isomorphism.
The \textbf{subobject classifier}, $\Omega$, is an object of $\CC$ and a monomorphism $true : * \to \Omega$ (where $*$ is the terminal object) such that for every monomorphism $U \to X$ in $\CC$ there is a unique morphism $\chi_U : X \to \Omega$ determining the correspondent pullback diagram.
%
%
A category $\CC$ is \textbf{Cartesian closed} if it has a terminal object and, for each pair of objects $X$ and $Y$ in $\CC$, the product $X\times Y$ and the exponential object $Y^X$ exist.
Any Heyting algebra $\HH$ seen as a thin small (poset) category is Cartesian closed: for all $x,y\in \HH$, the product of $x$ and $y$ is $x\wedge y$ and the exponential $x^y$ is $x\Rightarrow y$.
%
A \textbf{topos} is a Cartesian closed category with all finite limits and a subobject classifier.  
The category $Set$ is a topos with subobject classifier $\Omega = \{0,1\}$. 
Roughly speaking, any topos behaves as a category of sheaves of sets on a topological space.
If $\X$ is a topological space, the category of sheaves over $\X$, $Sh(\X)$, is a topos with $\Omega(U ) = \{V \mid V \subset U \}$.
In general, the category of sheaves over a complete Heyting algebra is a topos (cf. \cite{Joh02}).
A good review on Heyting algebras, sheaf theory and, in particular, on topos theory can be found in \cite{Aw06}, \cite{Mac92} and \cite{Joh02}.


\section{Motivation on Persistent Homology}
\label{Motivation}

As motivation for this research, we describe some aspects of computational topology with a focus on the computation of persistent homology, as well as review some of the variants.
Persistent homology permits us to recover topological information by applying geometric tools followed by methods from algebraic topology to obtain a topological descriptor. 
In topological data analysis we often view data as a finite metric space, build complexes of points (most often \v Cech or Vietoris-Rips), and analyse the topology of these objects to infer the topology of that data. 
Recall that, due to the nerve theorem, the \v Cech complex associated with any covering of the space with balls is homotopy equivalent to the original space.
The construction of these complexes requires a choice of parameter (such as the radius of the balls for the \v Cech complex).  
Persistent homology lets the parameter value vary while tracking the births and deaths of topological features.
The output, in the standard case from \cite{edelsbrunner_topological_2002} is a set of intervals on the real line that can also be encoded as points in a persistence diagram. This measures the significance of a topological feature.
Usually additional restrictions are imposed to ensure that the homology changes at only finitely many values.
Many of these restrictions can be relaxed, defining persistence diagrams in a wide variety of situations as in \cite{Fre12}.

A \textbf{$p$-dimensional homology class} is an equivalence of \textbf{$p$-cycles}, i.e., a collection of mutually homologous points (in dimension $p=0$), closed curves (in dimension $p=1$) or closed surfaces (in dimension $p=2$) in $\X$. 
The \textbf{$p$-th homology group} of the space $\X$ is the vector space $H_p(\X)$ of all $p$-dimensional homology classes with rank $\beta _p(\X)$, the \textbf{$p$-th Betti number} of $\X$.  
If $\X_0$ is a subspace of $\X$, the \textbf{$p$-th relative homology group} of the pair of spaces $(\X,\X_0)$ is the vector space $H_p(\X,\X_0)$ of all $p$-dimensional relative homology classes with rank $\beta _p(\X)$, the \textbf{$p$-th relative Betti number} of $(\X,\X_0)$.  
The \textbf{essential classes} are the ones that represent the homology of $\X$ while all others are called \textbf{inessential classes}.
As homology classes do not come with a notion of size, persistent homology takes a compact topological space $\X$ along with a real-valued (height) function $f$ and returns the size, as measured by $f$, of each homology class in $\X$.

Let $\X$ be a space and $f:\X \rto \R$ a real function.  
We denote $\oplus_i H_i(\X)$ by $H_*(\X)$.
The object of study of persistent homology is a \textbf{filtration} of $\X$, i.e., a monotonically non-decreasing sequence
\begin{equation*}
\emptyset = \X_0 \subseteq \X_1 \subseteq \X_2 \subseteq \ldots\subseteq\X_{n-1} \subseteq \X_n = \X
\end{equation*}
determined by the (height) function $f$ as follows: $X_i=f^{-1}(]- \infty, t])$. 
As $t$ runs in $]-\infty,+\infty[$, the sublevel sets $\X_i$ include into one another and get bigger, eventually forming the space $\X$ itself, while homology classes appear and disappear. Persistent homology is the homology of a filtration, tracking and quantifying the described evolution.
To simplify the exposition, we assume that this is a discrete finite filtration of tame spaces. Taking the homology of each of the associated chain complexes, we obtain 
\begin{equation*}
 \Hg_*(\X_0) \rto  \Hg_*(\X_1) \rto  \Hg_*(\X_2) \rto \ldots\rto \Hg_*(\X_{n-1}) \rto  \Hg_*(\X_n).
\end{equation*}
We take homology over a field $\kk$ -- therefore the resulting homology groups are vector spaces and the induced maps are linear maps. 
The standard persistent homology module $H_*(\X)$ describes how the absolute homology groups $H_*(X_i)$ relate to each other as $i$ varies.
Due to a version of Alexander duality \cite{Dual11}, a similar description is possible for the absolute cohomology groups $H^*(X_i)$, the relative homology groups $H_*(X_n,X_i)$, and the relative cohomology groups $H^*(X_n,X_i)$ as represented below:
\begin{center}
$\begin{array}{lcr}
H_*(\X): &  H_*(X_1)\rightarrow \hdots \rightarrow H_*(X_{n-1})\rightarrow H_*(X_n) \\
H^*(\X): &  H^*(X_1)\rightarrow \hdots \rightarrow H^*(X_{n-1})\rightarrow H^*(X_n) \\
H_*(X_{\infty},\X): & H_*(X_n)\rightarrow H_*(X_n,X_1)\rightarrow \hdots \rightarrow H_*(X_n,X_{n-1})  \\
H^*(X_{\infty},\X): & H^*(X_n)\rightarrow H^*(X_n,X_1)\rightarrow \hdots \rightarrow H^*(X_n,X_{n-1})    
\end{array}$
\end{center}
To each homology class is assigned a \textbf{lifetime} encoded by an interval with endpoints in the real line. To each dimension, encoded as a non-negative integer k, corresponds a \textbf{persistence barcode}, by which we mean the finite collection of lifetimes of the homology classes appearing in its filtration. The integer $k$ specifies a dimension of a feature (zero-dimensional for a cluster, one-dimensional for a loop, etc.), and any bar in the barcode represents a feature which is \textbf{born} at the value of a parameter given by the left hand endpoint of the interval, and which \textbf{dies} at the value given by the right hand endpoint. 
A \textbf{persistence diagram} is a multi-set of points of $\R\times \R$ representing a persistence barcode by corresponding a bar in the barcode with birth time $x$ and death time $y$ by the point $(x,y)$. 
 
The persistence diagram for absolute cohomology (as for relative homology and cohomology) is also a multi-set of integer ordered pairs.
Moreover, homology and cohomology have identical barcodes, while persistent homology and relative homology barcodes carry out the same information, with a dimension shift for the finite intervals.
Thus, provided we take the dimension shifts into account, all four barcodes carry exactly the same information (cf. \cite{Dual11}).

Given a real-valued function $f:\X \rightarrow \R$, we call \textbf{extended persistence} to the collection of pairs arising from a sequence of absolute and relative homology groups.
The correspondent pairs in the extended persistence diagram keep track on the changes in the homology of the input function.
As in \cite{Be12}, we consider persistent homology as a two-stage filtering process:
\begin{itemize}
\item[]\textbf{standard persistence}: In the first stage, filter $\X$ via the sub level sets $\X_r= f^{-1}(-\infty,r]$ of $f$, where $r\in \R$; 
\item[]\textbf{extended persistence}: In the second stage, we consider pairs of spaces $(\X,\X^r)$, where $\X^r=f^{-1}[r,\infty)$ is a super level set and $r\in \R$.
\end{itemize}

Every class which is born at some point of the two-stage process will eventually die, being associated with a pair of critical values.
These pairs fall into three types:
\begin{itemize}
\item[(i)] \textbf{ordinary pairs} : have birth $x$ and death $y$ during the first stage, being represented in the persistence diagram by a point $(x,y)$ with $x<y$;
\item[(ii)] \textbf{relative pairs} : have birth $x$ and death $y$ during the second stage, being represented in the persistence diagram by a point $(x,y)$ with $y<x$;
\item[(iii)] \textbf{extended pairs} : have birth $x$ in one stage and death $y$ in the other (their representation in the persistence diagram will coincide with one of the two cases above as seen in Fig. \ref{extended1}.
\end{itemize}
In Figure \ref{extended1} it is represented a version of the classical case of a torus with a height function from \cite{Be12}, together with the correspondent barcode comprehending the ordinary classes given by the bars in a positive (upwards) direction; the relative classes given by the bars in the negative (downwards) direction; and the extended classes in which the bars go to infinity and then come back. 
Also in the same figure its represented (on the right) the traditional persistence diagram that tracks the  topological information given by the barcodes.
Notice that the ordinary classes are represented by points $(x,y)$ where $x<y$, while relative classes are represented by points $(x,y)$ where $y<x$. 

\begin{figure}
\includegraphics[scale=0.5, page=1,width=0.6\textwidth]{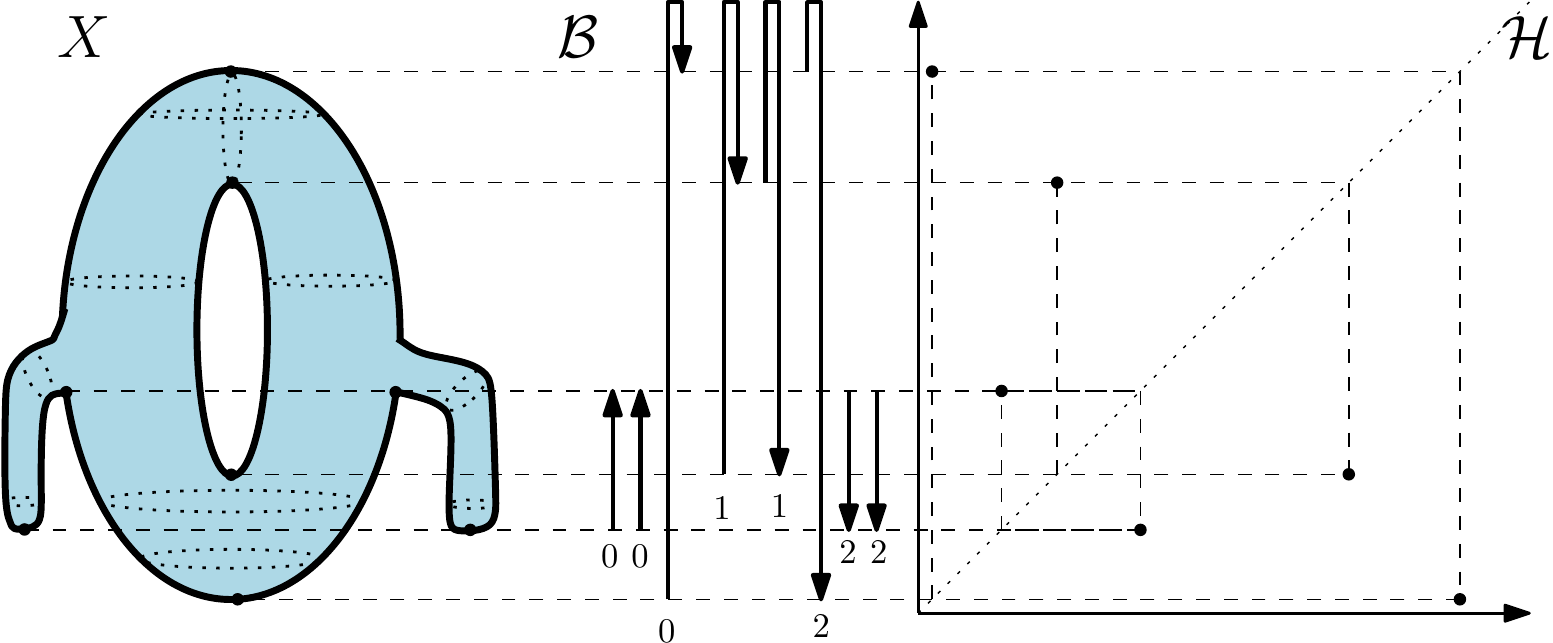} 
\caption{
The height function of a topological space $X$, its correspondent extended barcode $\mathcal{B}$ and the persistence diagram $\HH$.
The numbers below the bars correspond to the dimensions they refer to. 
Notice that in this example there are two elements of the persistence diagram with multiplicity 2, all others have multiplicity 1.
}\label{extended1}
\end{figure}

\begin{remark}\label{assumption}
It is possible to distinguish both the multiplicity of an element in a persistence diagram, or the indication that such element corresponds to an extended bar or not. 
Though, in the following sections we shall consider the space of all ordinary and relative pairs in a persistence diagram, ignoring their multiplicity and identifying these pairs with the extended pairs that have the same coordinates.
We shall distinguish between relative and ordinary pairs, corresponding to bars with different orientation.
Other possible directions of research point out to have the algebra of lifetimes, described in the next section, being build over aspects of total and pointwise existence of persistence. 
This provides new arguments for the choice of this model and is a research topic by itself to be developed in further steps. 
\end{remark}

In the variants described above, the methods are tied to the total order of the reals $\R$.
Generalisations of persistence such as zigzag persistence \cite{Cle15} or multidimensional persistence \cite{Les15} do consider other oreder underlying structures. However, these theories are much less developed. 
The purpose of this investigation is to explore the underlying lattice structure and introduce a unifying framework where these (and other) generalisations of persistence can find a common language.


\section{The algebra of lifetimes}
\label{The algebra of lifetimes}
Our interest is to model the algebra of barcodes used in the methodology of persistent homology, considering non disjoint intervals, i.e., \emph{time indexed sets}. 
In this, given a time indexed set $t$, $\bigwedge_{i\in I} t_i$ is the birth time while $\bigvee_{i\in I} t_i$ is the death time. 
With this in mind, the set of all intervals of real numbers, ordered by set inclusion, corresponds to the algebra of open sets of the topological space $\R$ and constitutes a complete Heyting algebra for the operations of set theoretic intersection and union. 
However it also includes disjoint intervals which are not of interest in our model, in the sense that a lifetime should correspond to a closed interval of the real line, having a birth time and a death time.
Fix $\varepsilon \in \R^+$ and consider the total order in the positive real numbers no bigger than $\varepsilon$ including zero, i.e., the complete lattice $([0,\varepsilon]; \wedge, \vee)$ where $x\wedge y=inf\{x,y\}$ and $x\vee y=sup\{x,y\}$, for any $x,y\in \R$.
If we substitute set theoretic union by its cover (i.e., $[a_1,a_2]\vee [b_1,b_2]=[a_{1}\wedge b_{1},a_{2}\vee b_{2}]$), we get a complete lattice. 
It unfortunately does not constitute a Heyting algebra as distributively fails:
to see this consider the intervals $a=[0,0.2]$, $b=[0, 0.5]$ and $c=[0.6,1]$ and observe that $a\subseteq b$ and that
\[
(a\vee c)\wedge b = [0,1]\wedge b = b \neq a = a\vee \emptyset = (a\wedge b)\vee (c\wedge b).
\]
Consider now the representation of barcodes in a persistence diagram. 
Let $\HH$ be the quarter plane of all the points in all possible persistence diagrams bounded by $(0,0)$ and $(\varepsilon_1,\varepsilon_2)$. 
Let $a=(a_1,a_2)$ be a point in a persistence diagram and denote by $\BB(a)$ the correspondent interval $[a_1,a_2]$, where $a_1$ is the birth time and $a_2$ is the death time. 
Recall from Remark \ref{assumption} that we are also considering points of the persistence diagram with coordinates such that $a_2<a_1$.
%
Consider the set $\HH$ of points of all possible persistence diagrams.
Then, $\HH$ with the operations
\[
a\wedge b = (a_1\vee b_1,a_2\wedge b_2) \text{   and   }  a\vee b = (a_1\wedge b_1,a_2\vee b_2)
\]
constitutes a lattice ordered by
\[
a\leq b \text{  if and only if  } b_{1}\leq a_{1} \text{  and  } a_{2}\leq b_{2},
\]
for all $a=(a_1,a_2),b=(b_1,b_2)\in \HH$.
%
%

\begin{figure}
\subfigure[lattice operations $\wedge$ and $\vee$ for related elements]{
\includegraphics[page=1,width=0.45\textwidth]{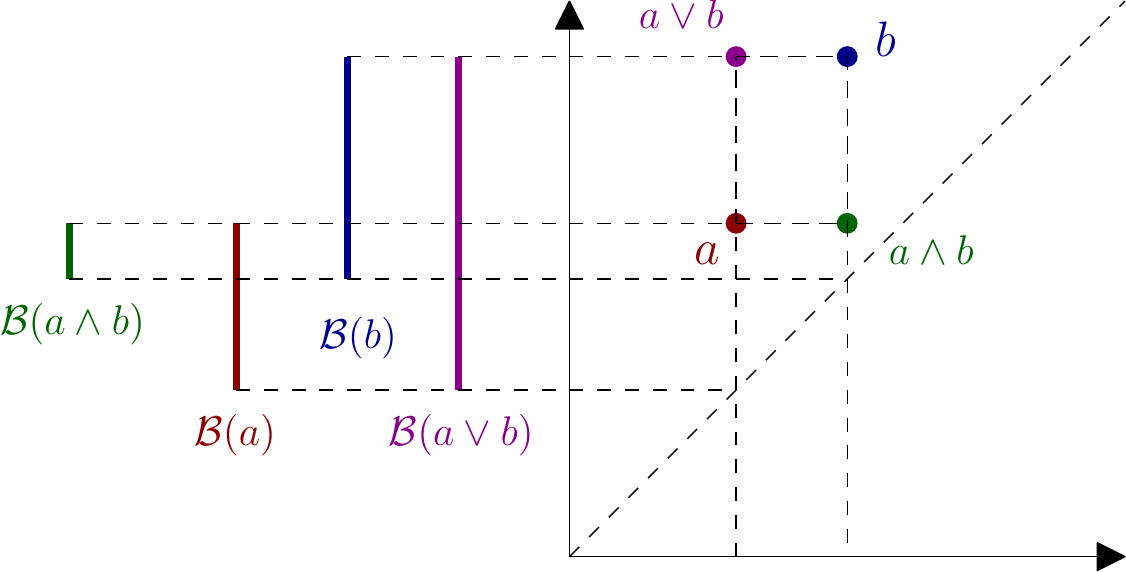} 
}
\subfigure[lattice operations $\wedge$ and $\vee$ for unrelated elements]{
\includegraphics[page=1,width=0.45\textwidth]{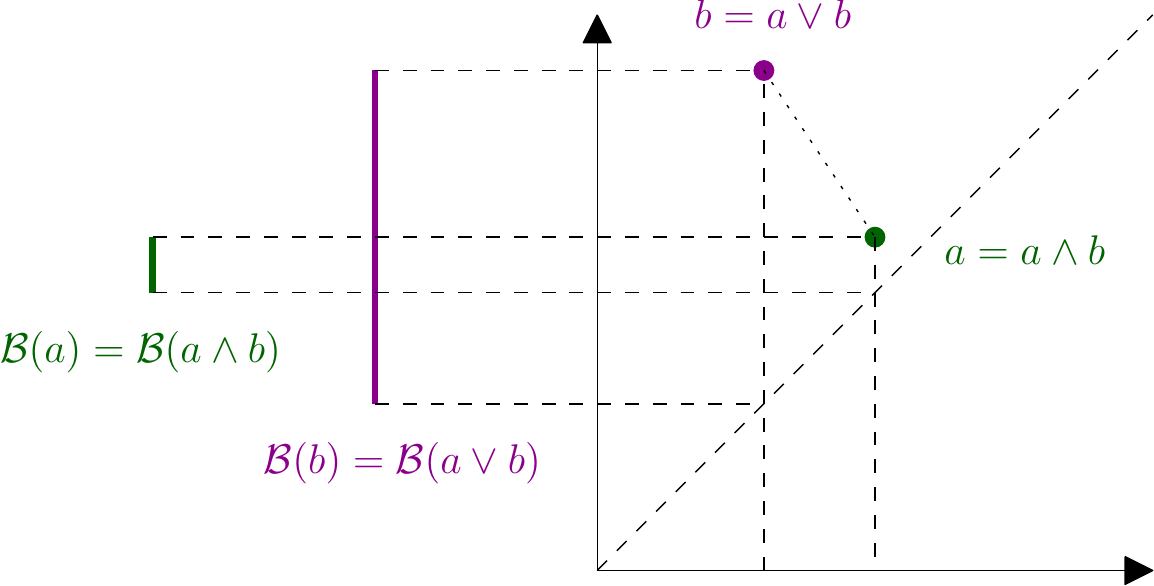} 
}
\caption{The representation of the lattice operations $\wedge$ and $\vee$ in the algebra of lifetimes $\HH$.}
\label{figjoinmeet}
\end{figure}

These operations give to $\HH$ the algebraic structure of a lattice, named \textbf{the algebra of lifetimes}.
In fact considering the non negative real numbers in $[0,\varepsilon_i]$ for $i\in\{1,2\}$ with the usual partial order as the poset category where a morphism $a\rightarrow b$ is just $a\leq b$, $\HH$ is given by the product $[0,\varepsilon_1]^{op} \times [0,\varepsilon_2]$.
We will not distinguish the notation of theses lattice operations neither the correspondent order from the notations for lattice operations and order in $[0,\varepsilon]$ whenever the context is clear. 
Figure \ref{figjoinmeet} shows a representation of those operations from $\HH$ in the extended barcode and in the persistence diagram.

Each element $x$ of $\HH$ is an ordered pair $(x_1,x_2)$ that can be seen as a \textbf{generalised time interval} with:
\begin{itemize} 
\item[(i)]  a \textbf{length}, given by the absolute value of $x_2-x_1$ that corresponds to the persistence of the measured topological feature;
\item[(ii)]  an \textbf{orientation}: \textbf{positive} if $x_1\leq x_2$ and \textbf{negative} if $x_2\leq x_1$.
\end{itemize}
The degenerate case where $x_1=x_2$ corresponds to a lifetime of length zero. 

The partial order determined by the lattice operations shows us how the operations are indeed natural: the derived order structure corresponds to the inclusion of the correspondent bars for the upper triangle where death times are smaller then birth times: for all $x,y\in \HH$ such that $x<y$,
\[
x\leq y \text{  if and only if   } \BB(x)\subseteq \BB(y).
\]
\emph{This is not a total order}: unrelated bars $\BB(a_1,a_2)$ and $\BB(b_1,b_2)$ are of such sort that $a_1\leq b_1$ and $a_2\leq b_2$.
Moreover, \emph{the smallest element} $\perp$ is in the right lower corner of the diagram, correspondent to the point $(\varepsilon_1,0)$, while \emph{the biggest element} $\top$ is on the left upper corner, correspondent to the point $(0,\varepsilon_2)$. 
Furthermore, \emph{if one of the coordinates is equal, then the correspondent bars are always related}. 
Hence, bars with equal death time or birth time must be included in one another. 

\begin{proposition}\label{infdist}
The algebra of $\HH$, together with the extended operations
\[
\bigwedge_{i\in I} a_i = (\bigvee _{i\in I} a_{1i},\bigwedge a_{2i}) \text{  and  } \bigvee_{i\in I} a_i = (\bigwedge_{i\in I} a_{1i},\bigvee a_{2i})
\]
determines a complete lattice satisfying the infinite distributive law given by the following identity:
\[
x\wedge {\bigvee_{i\in I}y_{i}}={\bigvee_{i\in I}(x\wedge y_{i})},
\]
for all $x\in \HH$ and any family $\{y_i\}_{i\in I}$ of elements of $\HH$.
Hence, $\HH$ is a complete Heyting algebra.
\end{proposition}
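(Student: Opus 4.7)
The plan is to reduce the proposition to the fact that a product of frames is a frame. As noted in the paragraph preceding the statement, as a poset $\HH$ is the product $[0,\varepsilon_1]^{op}\times[0,\varepsilon_2]$. I would first verify that the binary formulas given for $\wedge$ and $\vee$, and the extended formulas stated in the proposition, are exactly the coordinatewise meet and join in this product poset: the first coordinate is reversed, so $\wedge$ in $\HH$ takes the usual \emph{maximum} there while $\vee$ takes the usual \emph{minimum}, which is precisely what the formulas say. This reduction from a somewhat unfamiliar looking algebra to a textbook product of chains is the conceptual core of the argument.

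Next I would establish completeness. Each factor $[0,\varepsilon_i]$ is a complete chain (being a bounded closed interval of $\R$, every subset has a supremum and infimum), and the dual of a complete lattice is complete, so $[0,\varepsilon_1]^{op}$ is complete as well. Any product of complete lattices is complete with suprema and infima computed coordinatewise, so $\HH$ is a complete lattice and the displayed extended formulas indeed describe arbitrary meets and joins.

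For the infinite distributive law, I would verify it first on each factor and then transport it through the product. Every complete chain $C$ is a frame: given $x\in C$ and a family $\{y_i\}\subseteq C$, if $x\geq \bigvee_i y_i$ both sides equal $\bigvee_i y_i$, and otherwise there exists some $i_0$ with $y_{i_0}>x$ so that $x\wedge y_{i_0}=x$, giving $\bigvee_i(x\wedge y_i)=x=x\wedge \bigvee_i y_i$. Hence $[0,\varepsilon_2]$ is a frame, and $[0,\varepsilon_1]^{op}$ is a frame for the same reason (it too is a complete chain). Since the meet and join in $\HH$ are computed coordinatewise, the identity
\[
x\wedge\bigvee_{i\in I} y_i \;=\; \bigvee_{i\in I}(x\wedge y_i)
\]
holds coordinatewise, hence in $\HH$.

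With both completeness and the infinite distributive law in hand, the characterisation of complete Heyting algebras recalled in the Preliminaries yields that $\HH$ is a complete Heyting algebra, with implication given by $x\Rightarrow y=\bigvee\{z\in\HH\mid z\wedge x\leq y\}$. The main pitfall, rather than any substantive obstacle, is simply bookkeeping around the dualisation of the first coordinate: it is easy to conflate the order of $[0,\varepsilon_1]$ with the order induced on the first coordinate by $\HH$, so I would be explicit at the outset that $a\leq b$ in $\HH$ means $b_1\leq a_1$ and $a_2\leq b_2$, and translate every chain-level identity through this convention.
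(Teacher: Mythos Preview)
Your proposal is correct and follows essentially the same route as the paper: both arguments verify the infinite distributive law coordinatewise, using that each factor $[0,\varepsilon_i]$ (respectively its opposite) already satisfies the required distributivity, and then invoke the characterisation of complete Heyting algebras via the infinite distributive law. The only difference is packaging: the paper writes out the two-coordinate computation directly and appeals to complete distributivity of the intervals, whereas you phrase the same step as ``a product of frames is a frame'' and supply the short case analysis showing every complete chain is a frame.
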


\begin{proof}
By construction, the lattice operations can naturally be generalised from pairs of elements to any set of elements.
The infinite distributivity law follows directly from the definition of the lattice operations together with the fact that $([\varepsilon_1,\varepsilon_2];\wedge,\vee)$ constitutes a completely distributive lattice. 
To see this just observe that: 
\[
x\wedge {\bigvee_{i\in I}y_{i}} = (x_1\vee (\bigwedge_{i\in I} y_{1i}),x_2\wedge (\bigvee_{i\in I} y_{2i}) ) = (\bigwedge_{i\in I} (x_1\vee y_i), \bigvee_{i\in I}(x_2\wedge y_{2i}) ) = {\bigvee_{i\in I}(x\wedge y_{i})}. 
\]
The fact that $\HH$ is a Heyting algebra follows from $\HH$ being a complete and distributive lattice satisfying the infinite distributivity law (cf. \cite{Joh86}).
\end{proof}

Let us now examine the implication operation. 
The following result describes this operation for any of the possible cases, represented in Figure \ref{figarrows}, both in the context of an extended barcode or a persistence diagram.

\begin{proposition}\label{galois1}
Let $a,b\in \HH$. Then, if $a$ and $b$ are (order) related,
\[
a\Rightarrow b=\begin{cases} %
\top=(0,\varepsilon_2) & \text{,  if  }  b_1\leq a_1 \text{  and  } a_2\leq b_2  \\ %
b=(b_1, b_2) & \text{,  if  }  a_1\leq b_1\text{  and  } b_2\leq a_2 %
 \end{cases}.
\]
Otherwise,
\[
a\Rightarrow b=\begin{cases} %
(b_1, \varepsilon_2) & \text{,  if  }  a_1\leq b_1 \text{  and  } a_2\leq b_2  \\ %
(0, b_2) & \text{,  if  }  b_1\leq a_1\text{  and  } b_2\leq a_2 %
 \end{cases}.
\]
\end{proposition}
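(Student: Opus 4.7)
The plan is to compute $a\Rightarrow b$ straight from its universal description in a complete Heyting algebra, which by Proposition \ref{infdist} is available in $\HH$, namely
\[
a\Rightarrow b = \bigvee\{\,z\in\HH \mid z\wedge a\leq b\,\}.
\]
Writing $z=(z_1,z_2)$ and using the explicit meet $z\wedge a=(z_1\vee a_1,\,z_2\wedge a_2)$ together with the order on $\HH$ (recall that $x\leq y$ in $\HH$ means $y_1\leq x_1$ and $x_2\leq y_2$), the defining inequality $z\wedge a\leq b$ becomes the pair of coordinate inequalities
\[
b_1\leq z_1\vee a_1 \qquad\text{and}\qquad z_2\wedge a_2\leq b_2
\]
inside the chains $[0,\varepsilon_1]$ and $[0,\varepsilon_2]$ respectively. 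These two conditions decouple in $z_1$ and $z_2$, so the supremum over $z$ factors into a separate optimisation in each coordinate, where (because of the reversed order in the first factor of $\HH$) I seek the smallest admissible $z_1$ and the largest admissible $z_2$.

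The one-coordinate analysis is immediate. For the first coordinate, if $b_1\leq a_1$ the constraint $b_1\leq z_1\vee a_1$ is automatic and the smallest admissible $z_1$ is $0$; if instead $a_1\leq b_1$, the constraint forces $z_1\geq b_1$, so the smallest admissible value is $z_1=b_1$. Symmetrically, for the second coordinate, if $a_2\leq b_2$ the constraint $z_2\wedge a_2\leq b_2$ is automatic and the largest admissible $z_2$ is $\varepsilon_2$; otherwise $b_2\leq a_2$ forces $z_2\leq b_2$ and the maximum is $z_2=b_2$. Assembling the four combinations yields the four displayed formulas: $(0,\varepsilon_2)=\top$ when $a\leq b$; $(b_1,b_2)=b$ when $b\leq a$; and $(b_1,\varepsilon_2)$ or $(0,b_2)$ in the two unrelated regimes, exactly matching the statement.

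There is no real obstacle, just one small pitfall: because the first coordinate of $\HH$ is ordered oppositely, one has to remember that maximising in $\HH$ means \emph{minimising} $z_1$ and \emph{maximising} $z_2$, and it is easy to slip up on this point. As a verification step I would substitute each of the four candidates back and check that $(a\Rightarrow b)\wedge a\leq b$ holds directly from the formulas, confirming that the supremum is actually attained in $\HH$. A more conceptual version of the same argument is to recognise $\HH\cong[0,\varepsilon_1]^{op}\times[0,\varepsilon_2]$ as a product of two totally ordered Heyting algebras and invoke the fact that Heyting implication in a product is formed coordinatewise, together with the standard rule that in a chain $x\Rightarrow y$ equals $\top$ if $x\leq y$ and equals $y$ otherwise; this reproduces the case split above without any explicit optimisation.
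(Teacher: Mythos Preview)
Your proof is correct and follows essentially the same route as the paper's: both start from the universal description $a\Rightarrow b=\bigvee\{z\mid z\wedge a\leq b\}$, unwind the meet and the order in coordinates to obtain the decoupled constraints $b_1\leq z_1\vee a_1$ and $z_2\wedge a_2\leq b_2$, and then analyse the four cases. Your additional remark that $\HH\cong[0,\varepsilon_1]^{op}\times[0,\varepsilon_2]$ lets one compute the Heyting implication coordinatewise using the chain rule is a nice conceptual shortcut that the paper does not make explicit.
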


\begin{proof}
The fact that the persistence diagram $\HH$ is a complete distributive lattice implies that the implication operation is defined for every $a,b\in \HH$ as follows:
\[
a\Rightarrow b = \bigvee \{x \in \HH \mid  x\wedge a\leq b\}=\bigvee \{x \in \HH \mid  x\wedge a\wedge b=x\wedge a\},
\]
that is,
\[
a\Rightarrow b = \bigvee \{(x_1,x_2) \in \HH \mid  a_1\vee b_1\vee x_1= a_1\vee x_1 \text{  and  }  a_2\wedge b_2\wedge x_2=a_2\wedge x_2\},
\]
that is,
\[
a\Rightarrow b = \{(\bigvee_{i\in I} x_{1i}, \bigwedge_{i\in I} x_{2i}) \mid    b_1\leq a_1\vee x_1 \text{  and  }  a_2\wedge x_2\leq b_2 \}.
\]

\emph{Case 1:} When $a\leq b$ we get that $b_1\leq a_1$ and $a_2\leq b_2$ so that $x_1$ and $x_2$ can take any value. 
The biggest bar in these conditions has the smallest $x_1$ and the biggest $x_2$, thus coinciding with the maximum element $\top$.

\emph{Case 2:} On the other hand, if $b\leq a$ then $a_1\leq b_1$ and $b_2\leq a_2$ so that $a\wedge x\leq b$ can only hold when $b_1\leq x_1$ and $x_2\leq b_2$. 
The biggest interval in this conditions corresponds to $x_1=b_1$ and $x_2=b_2$. 

\emph{Case 3:} When $a_1\leq b_1$ and $a_2\leq b_2$, then $a_1\in \set{x_{1i}\mid a_1\wedge x_{i1}\leq b_1}_{i\in I}$ and $a_2\vee x_{i2}=x_{i2}$ so that 
\[
a\Rightarrow b =(\bigvee_{i\in I} x_{1i}, \bigwedge_{i\in I} x_{2i}) \text{,  such that  }  b_2 \leq x_{i2}=(0,b_2).
\]

\emph{Case 4:} Similarly, whenever $b_1\leq a_1$ and $b_2\leq a_2$, then $a_1\wedge x_{i1}=x_{i1}$ and $a_2\in \set{x_{2i}\mid b_2\leq a_2\vee x_{i2}}_{i\in I}$.
Therefore, 
\[
a\Rightarrow b =(\bigvee_{i\in I} x_{1i}, \bigwedge_{i\in I} x_{2i}) \text{,  such that  }  x_{i1}\leq b_1=(b_1,0).
\]
\end{proof}

\begin{remark}
Due to the completeness of the underlying lattice structure, the operations $\wedge $ and $\rightarrow$ are adjoints in two suitable monotone Galois connections. 
Particularly, the fact that $\HH$ is a Heyting algebra implies that, given $a\in \HH$, the mapping $\varphi_{a}:\HH \rightarrow \HH$ defined by $\varphi_{a} (x)=a\wedge x$ is the lower adjoint of a Galois connection with respective adjoint $\psi_{a}: \HH \rightarrow \HH$ defined by $\psi_{a}(x) = a\Rightarrow x$. 
This Galois connection defines a pair of dual topologies so that such topologies can be defined by means of binary relations (cf. \cite{Bor94}).
Given $X=(x_1,x_2)\in \HH$, $x\wedge a\leq b$ if and only if $x\leq (a\Rightarrow b)$.
But $x\leq (a\Rightarrow b)$ means that $b_1\leq a_1\vee x_1$ and $a_2\wedge x_2\leq b_2$.
Thus
\[
\begin{cases} %
0\leq x_1,x_2\leq \varepsilon_2 & \text{,  if  }  b_1\leq a_1 \text{  and  } a_2\leq b_2  \\ %
b_1\leq x_1, x_2\leq b_2 & \text{,  if  }  a_1\leq b_1\text{  and  } b_2\leq a_2 \\ %
b_1\leq x_1, x_2\leq \varepsilon_2 & \text{,  if  }  a_1\leq b_1 \text{  and  } a_2\leq b_2  \\ %
0\leq x_1, x_2\leq b_2 & \text{,  if  }  b_1\leq a_1\text{  and  }b_2\leq a_2 %
 \end{cases}
\]
Then, for all $x\in \HH$, 
\[
(x\wedge a)\leq b \text{    iff    } 
\begin{cases} %
a\leq b \\ %
x\leq b & \text{,  if  }  b\leq a \\ %
b_1\leq x_1 & \text{,  if  }  a_1\leq b_1 \text{  and  } a_2\leq b_2  \\ %
x_2\leq b_2 & \text{,  if  }  b_1\leq a_1\text{  and  }b_2\leq a_2 %
 \end{cases}
\]
These are thus the conditions for candidates in the algebra $\HH$ to be the element $a\Rightarrow b$. 
\end{remark}

\begin{figure}
\subfigure[arrow between related elements]{
\includegraphics[scale=0.5, page=1,width=0.45\textwidth]{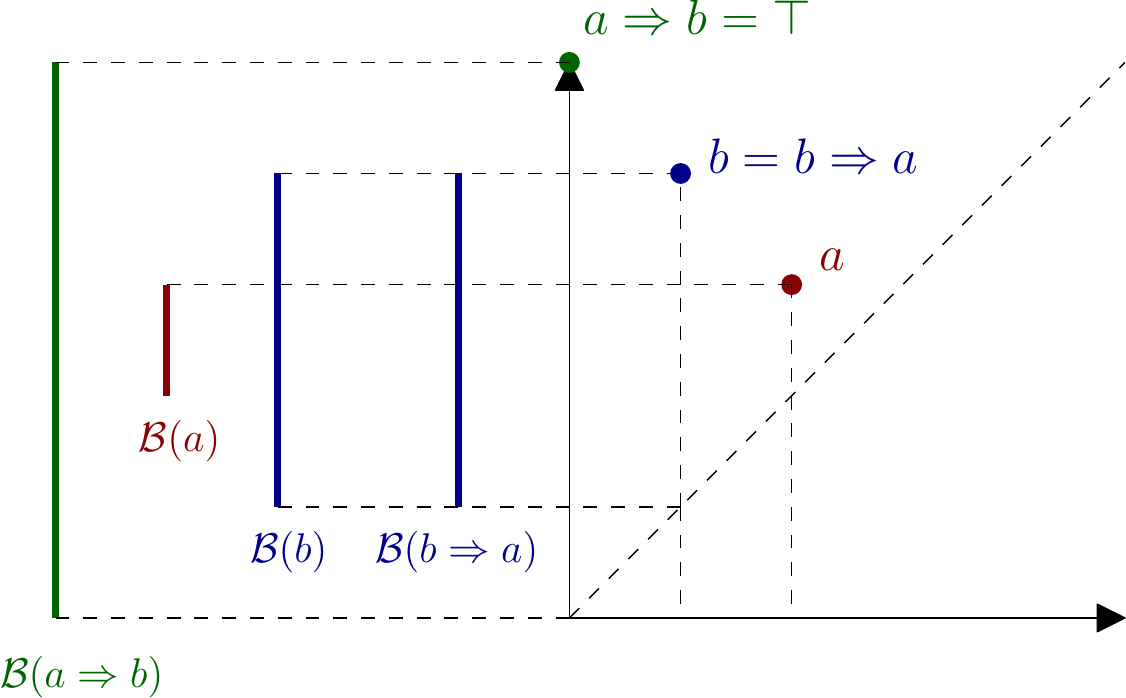}

}
\subfigure[arrow between unrelated elements]{
\includegraphics[scale=0.5, page=1,width=0.45\textwidth]{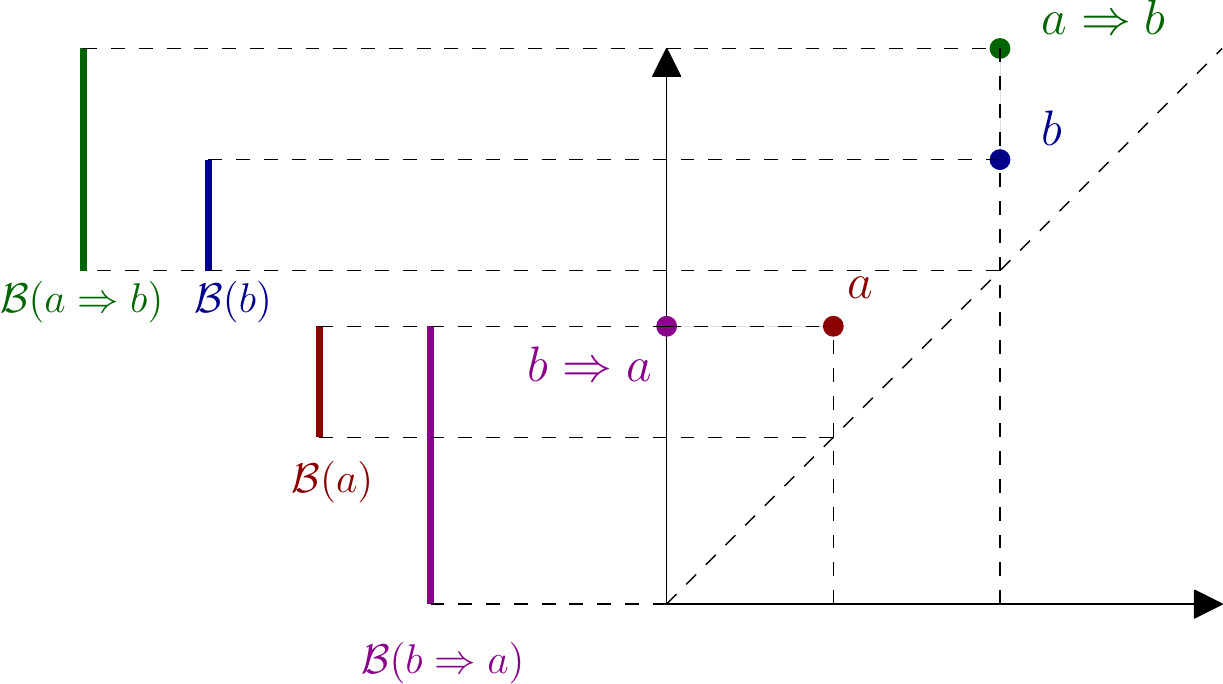}
}
\caption{Arrow operation between related and unrelated elements represented both in the algebra of lifetimes $\HH$.}
\label{figarrows}
\end{figure} 
 
\begin{remark}\label{galoish}
Observe that the algebra of lifetimes $\HH$ has a largest element $\top$ given by $x\Rightarrow x$, for any $x\in \HH$.
Moreover, $\HH$ is not a Boolean algebra: indeed, take $a=(a_1,a_2)$ such that $a_1,a_2\neq 0$, $a_1\neq \varepsilon_1$ and $a_2\neq \varepsilon_2$ (recall that $\varepsilon_1$ and $\varepsilon_2$ are the biggest values on the $X$ coordinates and $Y$ coordinates). 
Then, for all $b=(b_1,b_2)$, $a\wedge b=\perp$ implies $\varepsilon_1=a_1\vee b_1$ and $0=a_2\wedge b_2$, that is, $\varepsilon_1=b_1$ and $0=b_2$.
Similarly, $a\vee b=\top$ implies $0=a_1\wedge b_1$ and $\varepsilon_2=a_2\vee b_2$, that is, $0=b_1$ and $\varepsilon_2=b_2$.
Hence, $a$ has no complement in $\HH$.
In fact, the only complemented elements in $\HH$ are $(\varepsilon_1,0)$, $(0,0)$, $(\varepsilon_1,\varepsilon_2)$ and $(0,\varepsilon_2)$.
\end{remark}

%
%

The following paragraphs describe the order structure of $\HH$ through the study of its ideals and filters. 
Towards the end of this section, we shall also discuss aspects of the dual space for $\HH$ in the light of Stone duality.
An ideal of the algebra of lifetimes $\HH$ is any downset $I$ that constitutes a subalgebra, i.e., any subset $I$ closed to the operation $\vee$ such that, for all $x\in \HH$ and $y\in I$, $x\leq y$ implies $x\in \HH$. 
Notice that an ideal of $\HH$ closed to arbitrary joins must be determined by one unique element, i.e., it must be a principal ideal. 
Given an arbitrary element $a=(a_1,a_2)$, the ideal it generates is the following set: 
\[
\downarrow a = \set{x \in \HH \mid x \leq a} = \set{x \in \HH \mid a\vee x =a} = \set{(x_1,x_2)\in \HH \mid x_1\wedge a_1=a_1 \text{  and  } x_2\vee a_2=a_2} 
\]
\[
= \set{(x_1,x_2)\in \HH \mid a_1\leq x_1 \text{  and  }  x_2\leq a_2}  =  [a_1,\varepsilon_1]\times[0, a_2].
\]
Dually, the principal filter generated by $a=(a_1,a_2)$ is $\uparrow a=[0, a_1]\times [a_2,\varepsilon_2].$

%


\begin{proposition}\label{genideal}
The ideal generated by two elements $a=(a_1,a_2)$ and  $b=(b_1,b_2)$ of $\HH$, $\downarrow \set{a,b}$, is the ideal generated by $a\wedge b$, i.e., $[a_1\vee b_1,\varepsilon_1]\times [0, a_2\wedge b_2].$
Dually, the filter generated by two elements $a=(a_1,a_2)$ and  $b=(b_1,b_2)$ of $\HH$, $\uparrow \set{a,b}$, is the filter generated by $a\vee b$, i.e., $ [0, a_1\wedge b_1]\times [a_2\vee b_2,\varepsilon_2].$
In general, for any family of elements $\set{a_i}_{i\in I}$ of $\HH$, we get that:
\[
\downarrow  \set{a_i}_{i\in I}=\downarrow (\bigwedge_{i\in I}  a_i) =[\bigvee_{i\in I} \set{a_i}_{i\in I},\varepsilon_1]\times [0, \bigwedge_{i\in I} \set{a_i}_{i\in I}]
\]
\[
 \text{  and  } \uparrow \set{a_i}_{i\in I}=\uparrow (\bigvee_{i\in I}  a_i)=[0, \bigwedge_{i\in I} \set{a_i}_{i\in I}]\times [\bigvee_{i\in I} \set{a_i}_{i\in I},\varepsilon_2]. 
\]
\end{proposition}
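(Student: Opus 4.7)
The plan is to reduce both identities to direct computations using the coordinate description of the order on $\HH$, together with the formulas for the lattice operations and their infinitary extensions from Proposition \ref{infdist}. The filter half will then follow by the evident duality.

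First, I would recall the calculation made just before the proposition: for any $a = (a_1, a_2) \in \HH$, the principal ideal is $\downarrow a = [a_1, \varepsilon_1] \times [0, a_2]$. Since $a \wedge b = (a_1 \vee b_1, a_2 \wedge b_2)$ by definition of the meet in $\HH$, substitution immediately yields $\downarrow(a \wedge b) = [a_1 \vee b_1, \varepsilon_1] \times [0, a_2 \wedge b_2]$. It then remains to identify $\downarrow\{a, b\}$ with this set: a point $x = (x_1, x_2)$ lies in the common downset $\downarrow a \cap \downarrow b$ exactly when $a_1 \leq x_1$, $b_1 \leq x_1$, $x_2 \leq a_2$ and $x_2 \leq b_2$, which rearranges to $a_1 \vee b_1 \leq x_1$ and $x_2 \leq a_2 \wedge b_2$ -- precisely the product form above -- and this is exactly $x \leq a \wedge b$.

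For an arbitrary family $\{a_i\}_{i \in I}$, the same argument carries over verbatim once the binary operations are replaced by their infinitary counterparts from Proposition \ref{infdist}, giving $\bigwedge_{i \in I} a_i = (\bigvee_{i\in I} a_{1i}, \bigwedge_{i\in I} a_{2i})$ and hence the stated product description. The filter identities are strict duals: begin from $\uparrow a = [0, a_1] \times [a_2, \varepsilon_2]$ (noted in the paragraph preceding the proposition), swap the roles of $\wedge$ and $\vee$ throughout, and invoke $\bigvee_{i \in I} a_i = (\bigwedge_{i\in I} a_{1i}, \bigvee_{i\in I} a_{2i})$.

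The step most liable to trip one up -- rather than a genuine obstacle -- is the reversal of direction in the first coordinate: the lattice order on $\HH$ inverts the usual order on the birth axis, so that the join of first coordinates governs the ideal side while their meet governs the filter side. Making this inversion explicit at the start, perhaps by invoking the factorisation $\HH = [0,\varepsilon_1]^{op} \times [0,\varepsilon_2]$ given earlier, keeps the four placements of $\wedge$ and $\vee$ straight and reduces the argument to routine bookkeeping.
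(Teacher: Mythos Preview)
There is a genuine error in your argument: you identify $\downarrow\{a,b\}$ with the \emph{intersection} $\downarrow a \cap \downarrow b$. That intersection equals $\downarrow(a\wedge b)$, as you correctly compute, but it is \emph{not} the ideal generated by $\{a,b\}$. By definition the ideal generated by a subset is the smallest ideal containing that subset; any ideal containing both $a$ and $b$ must contain $a\vee b$ (closure under joins) and hence all of $\downarrow(a\vee b)$, while conversely $\downarrow(a\vee b)$ is itself an ideal containing $a$ and $b$. So $\downarrow\{a,b\}=\downarrow(a\vee b)=[a_1\wedge b_1,\varepsilon_1]\times[0,a_2\vee b_2]$, and dually the filter generated by $\{a,b\}$ is $\uparrow(a\wedge b)$.

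If you read the paper's proof closely you will see that it in fact works with precisely these coordinates: it takes $(x_1,x_2),(x'_1,x'_2)\in [a_1\wedge b_1,\varepsilon_1]\times[0,a_2\vee b_2]$ and verifies closure under $\vee$ and the downset property. The occurrences of $a\wedge b$ (for ideals) and $a\vee b$ (for filters) in the displayed statement are therefore swapped relative to what is actually established; the proof itself uses the correct $a\vee b$ side. Once you make that correction, your coordinate computation is fine --- you just need to replace ``$\downarrow a\cap\downarrow b$'' by the argument that $\downarrow(a\vee b)$ is an ideal containing $a$ and $b$ and is contained in any such ideal, and then the infinitary version and the dual for filters go through exactly as you outline.
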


\begin{proof}
Take two arbitrary elements $a=(a_1,a_2)$ and  $b=(b_1,b_2)$ of $\HH$. 
Let $(x_1,x_2),(x'_1,x'_2)\in [a_1\wedge b_1,\varepsilon_1]\times [0, a_2\vee b_2]$ and $(y_1,y_2)\in \HH$.
Then $a_1\wedge b_1 \leq x_1\wedge x'_1$ and $x_2\wedge x'_2\leq a_2\vee b_2$, so that $(x_1,x_2)\vee (x'_1,x'_2)\in [a_1\wedge b_1,\varepsilon_1]\times [0, a_2\vee b_2]$.
On the other hand, if $(y_1,y_2)\leq (x_1,x_2)$ then $a_1\wedge b_1\leq x_1\leq y_1$ and $y_2\leq x_2\leq a_2\vee b_2 $, so that $(y_1,y_2)\in [a_1\wedge b_1,\varepsilon_1]\times [0, a_2\vee b_2]$
The dual result has a similar proof and the general case can be proven by induction on the number of generating elements, taking in account the completeness of the lattice.
\end{proof}


\begin{proposition}\label{joinirred}
The join-irreducible elements of $\HH$ are all the elements $(x_1,0)$ or $(\varepsilon_1,x_2)$, with $0\leq x_1<\varepsilon_1$ and $0\leq x_2<\varepsilon_2$.
Dually, the meet-irreducible elements are all the elements $(0,x_2)$ and $(x_1,\varepsilon_2)$, with $0< x_1\leq \varepsilon_1$ and $0< x_2\leq \varepsilon_2$. 
\end{proposition}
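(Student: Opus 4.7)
Proof proposal: The plan has two directions. For the forward direction, I would exhibit a canonical non-trivial join decomposition for every element not on the listed boundary. Specifically, take $x=(x_1,x_2)\in\HH$ with $x_1<\varepsilon_1$ and $x_2>0$, i.e., an element lying off both the top row and the right column (as drawn in the persistence diagram, this is the generic case). Using the formula $a\vee b=(a_1\wedge b_1,a_2\vee b_2)$ from the definition of the lattice structure on $\HH$, observe that
\[
(x_1,0)\vee(\varepsilon_1,x_2)=(x_1\wedge\varepsilon_1,0\vee x_2)=(x_1,x_2)=x.
\]
Neither summand equals $x$: the first differs in the second coordinate (since $x_2>0$) and the second differs in the first coordinate (since $x_1<\varepsilon_1$). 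Hence every such $x$ is not join-irreducible, which rules out everything outside the two claimed families.

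For the reverse direction, I would verify that the listed elements really are join-irreducible by a direct coordinate argument. Suppose $x=(x_1,0)$ with $x_1<\varepsilon_1$ and write $x=p\vee q$ with $p=(p_1,p_2),q=(q_1,q_2)$. Since $p,q\leq x$ in the order on $\HH$ (which demands $x_1\leq p_1,q_1$ and $p_2,q_2\leq x_2=0$), we get $p_2=q_2=0$ and $p_1,q_1\geq x_1$. Then $p_1\wedge q_1=x_1$ forces $p_1=x_1$ or $q_1=x_1$, hence $p=x$ or $q=x$. The case $x=(\varepsilon_1,x_2)$ with $x_2<\varepsilon_2$ is symmetric: the condition $p\vee q=x$ forces both first coordinates to equal $\varepsilon_1$ and $p_2\vee q_2=x_2$ with $p_2,q_2\leq x_2$, so one of them equals $x_2$.

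For the dual claim about meet-irreducibility, I would invoke the order-reversing bijection on $\HH$ induced by $(x_1,x_2)\mapsto(\varepsilon_1-x_1,\varepsilon_2-x_2)$, which swaps $\wedge$ and $\vee$ and swaps the boundary $\{x_2=0\}\cup\{x_1=\varepsilon_1\}$ (where join-irreducibles live) with the boundary $\{x_1=0\}\cup\{x_2=\varepsilon_2\}$ (where meet-irreducibles live). Alternatively, one repeats the argument above with $\wedge$ in place of $\vee$.

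The only real subtlety, and the step I would write most carefully, is bookkeeping on the non-trivial decomposition: one must check that the chosen $(x_1,0)$ and $(\varepsilon_1,x_2)$ genuinely differ from $x$ in both directions, which is exactly where the strict inequalities $x_1<\varepsilon_1$ and $x_2<\varepsilon_2$ in the statement come from. Edge cases such as $x_1=0$ or $x_2=\varepsilon_2$ are still join-irreducible because the decomposition above still produces summands distinct from $x$ only when $x$ lies off \emph{both} boundary strips, so nothing in the argument collapses at the corners $\top=(0,\varepsilon_2)$ or the elements $(0,0)$ and $(\varepsilon_1,\varepsilon_2)$.
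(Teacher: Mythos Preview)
Your two-direction strategy is exactly the paper's: exhibit a nontrivial join decomposition for a generic element, then verify directly that the boundary elements $(x_1,0)$ and $(\varepsilon_1,x_2)$ are join-irreducible, and finish by duality. Your decomposition $(x_1,0)\vee(\varepsilon_1,x_2)=(x_1,x_2)$ is in fact cleaner than the paper's, which writes $b=(a_1-1,a_2)$, $c=(a_1,a_2+1)$ and claims $a=b\vee c$; as written that computes to $(a_1-1,a_2+1)\neq a$, so the paper's specific choice is a slip and yours is the correct realisation of the same idea. The reverse-direction check and your anti-automorphism $(x_1,x_2)\mapsto(\varepsilon_1-x_1,\varepsilon_2-x_2)$ for the meet-irreducible case are fine.

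The one thing to fix is your final paragraph on edge cases. You assert that ``nothing in the argument collapses at the corners $\top=(0,\varepsilon_2)$'', suggesting $\top$ is join-irreducible. It is not: your own decomposition gives $(0,0)\vee(\varepsilon_1,\varepsilon_2)=(0,\varepsilon_2)=\top$ with both summands distinct from $\top$. The point is rather that $\top$ is \emph{not} on either of the two boundary strips $\{x_2=0\}$ or $\{x_1=\varepsilon_1\}$, so it is correctly excluded from the list of join-irreducibles, and your forward-direction decomposition handles it. Also, the strict inequalities needed for the forward direction are $x_1<\varepsilon_1$ and $x_2>0$ (so that the two summands differ from $x$), not $x_2<\varepsilon_2$ as you wrote; the constraint $x_2<\varepsilon_2$ in the statement is about the parameter range for the family $(\varepsilon_1,x_2)$, a separate bookkeeping issue. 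Tidy up that paragraph and the proof is complete.
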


\begin{proof}
Given an arbitrary element $a=(a_1,a_2)$, the elements $b=(a_1-1,a_2)$ and $c=(a_1,a_2+1)$ are such that $a=b\vee c$.
Thus, if $b$ and $c$ are distinct from $a$, then $a$ is not a join-irreducible element. 
Hence, the candidates for join-irreducible elements are the bars $a=(a_1,a_2)$ such that $a_1$ is the biggest first coordinate, i.e., $a_1=\varepsilon_1$; or $a_2$ is the  least second coordinate, i.e., $a_2=0$. 
Let us see that this indeed is the case. Take $x=(x_1,0)$ such that $x=a\vee b$ for any $a=(a_1,a_2)$ and $b=(b_1,b_2)$ in $\HH$. 
Then $0=a_2\vee b_2$, i.e., $a_2=b_2=0$ and $x=a_1\wedge b_1$ and thus $x=a_1$ or $x=b_1$ so that $x=a$ or $x=b$.
Similarly, $y=(\varepsilon_1,y_2)=(a_1,a_2)\vee (b_1,b_2)$ implies $\varepsilon_1=a_1\wedge b_1$, i.e., $a_1=\varepsilon_1=b_1$. 
On the other hand, $y_2=a_2$ or $y_2=b_2$ so that $y=a$ or $y=b$.
The proof regarding meet-irreducible elements is analogous.
\end{proof}

\begin{figure}
\includegraphics[page=1,width=0.6\textwidth]{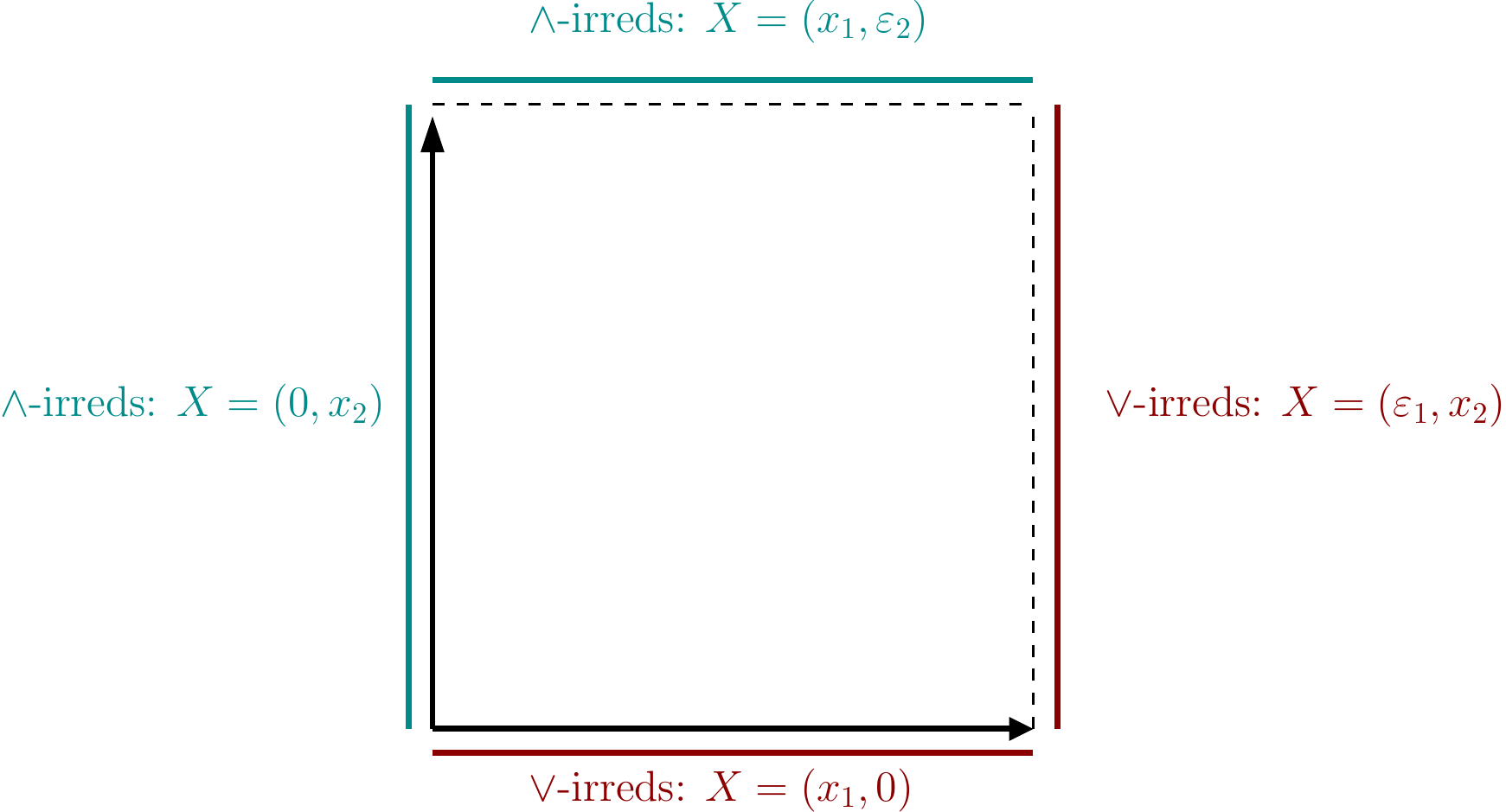}
\caption{$\wedge$-irreducible elements and $\vee$-irreducible elements in the algebra of all persistence diagrams.}
\label{figirred}
\end{figure}

\begin{proposition}
All prime principal ideals are of the form $\downarrow (x_1,\varepsilon_2) $ or $\downarrow (0, x_2)$, for some $(x_1,x_2)\in \HH$.
Dually, all prime principal filters are of the form $\uparrow (x_1,0)$ or $\uparrow (\varepsilon_1,x_2)$, for some $(x_1,x_2)\in \HH$.
\end{proposition}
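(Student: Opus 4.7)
The plan is to reduce the statement to the identification of meet- and join-irreducible elements already carried out in Proposition~\ref{joinirred}, via the following general fact: in a distributive lattice $L$, a principal ideal $\downarrow a$ is a proper prime ideal if and only if $a$ is meet-irreducible (and distinct from $\top$); dually, $\uparrow a$ is a proper prime filter if and only if $a$ is join-irreducible. Since by Proposition~\ref{infdist} the algebra $\HH$ is a complete Heyting algebra, hence distributive, this equivalence applies and the proposition follows by reading off the irreducibles listed in Proposition~\ref{joinirred}.

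The non-trivial direction of the equivalence — that meet-irreducibility of $a$ implies primality of $\downarrow a$ — is where distributivity is used. I would proceed as follows: take $u,v\in\HH$ with $u\wedge v\in \downarrow a$, i.e., $u\wedge v\leq a$. Using infinite distributivity from Proposition~\ref{infdist}, compute
\[
(u\vee a)\wedge (v\vee a) = (u\wedge v)\vee a = a.
\]
By meet-irreducibility of $a$, either $u\vee a=a$ or $v\vee a=a$, so $u\leq a$ or $v\leq a$, which is the primality condition for $\downarrow a$. Conversely, if $a = u\wedge v$ then $u\wedge v\in\downarrow a$, so primality forces $u\leq a$ or $v\leq a$; combined with $a = u\wedge v\leq u,v$, this yields $u=a$ or $v=a$, showing $a$ is meet-irreducible.

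Combining this equivalence with Proposition~\ref{joinirred}, which identifies the meet-irreducibles of $\HH$ as precisely the elements of the form $(0,x_2)$ with $0<x_2\leq\varepsilon_2$ and $(x_1,\varepsilon_2)$ with $0<x_1\leq\varepsilon_1$, concludes the first statement. The dual statement for principal prime filters is entirely analogous: $\uparrow a$ is prime iff $a$ is join-irreducible, and Proposition~\ref{joinirred} lists these as $(x_1,0)$ and $(\varepsilon_1,x_2)$.

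I do not expect a real obstacle here beyond one small definitional point: the text's phrasing of "prime ideal" must be read in the standard sense $u\wedge v\in I \Longrightarrow u\in I \text{ or } v\in I$, so that the equivalence with meet-irreducibility of the generator is available. Once that is settled, the proof amounts to citing Proposition~\ref{infdist} for distributivity and Proposition~\ref{joinirred} for the list of (co-)irreducibles, with the short distributivity computation displayed above as the only genuine calculation.
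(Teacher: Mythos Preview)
Your argument is correct and follows essentially the same route as the paper: both reduce the question to the equivalence, in a distributive lattice, between primality of $\downarrow a$ and meet-irreducibility of $a$, and then invoke Proposition~\ref{joinirred}. The only difference is cosmetic: the paper obtains this equivalence by citing Gr\"atzer and passing through the characterisation ``$a$ is $\wedge$-irreducible iff $\HH\setminus\downarrow a$ is a prime filter'' together with ``$I$ prime ideal iff $\HH\setminus I$ prime filter'', whereas you give the short direct computation $(u\vee a)\wedge(v\vee a)=(u\wedge v)\vee a=a$ yourself. Either way the content is the same.
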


\begin{proof}
Let $x\neq \perp$. 
It follows from the distributivity of $\HH $ that $x$ is $\vee-$irreducible if and only if $\HH \backslash \uparrow x$ is a prime ideal (cf.\cite{Gr71} pp.63).
Dually, $x$ is $\wedge $-irreducible element of $\HH$ if and only if $\HH \backslash \uparrow x$ is a prime filter.
On the other hand, $\HH \backslash \uparrow x$ is a prime filter if and only if $\downarrow x$ is a prime ideal of $\HH$ (cf.\cite{Gr71} pp.25).
Then, $x$ is $\wedge $-irreducible element of $\HH$ if and only if $\downarrow x$ is a prime ideal of $\HH$.
Hence, all prime principal ideals are determined by an element of the form $(x_1,\varepsilon_2) $ or $(0, x_2)$, for some $x=(x_1,x_2)\in \HH$.
\end{proof}

\begin{figure}
\includegraphics[page=1,width=0.5\textwidth]{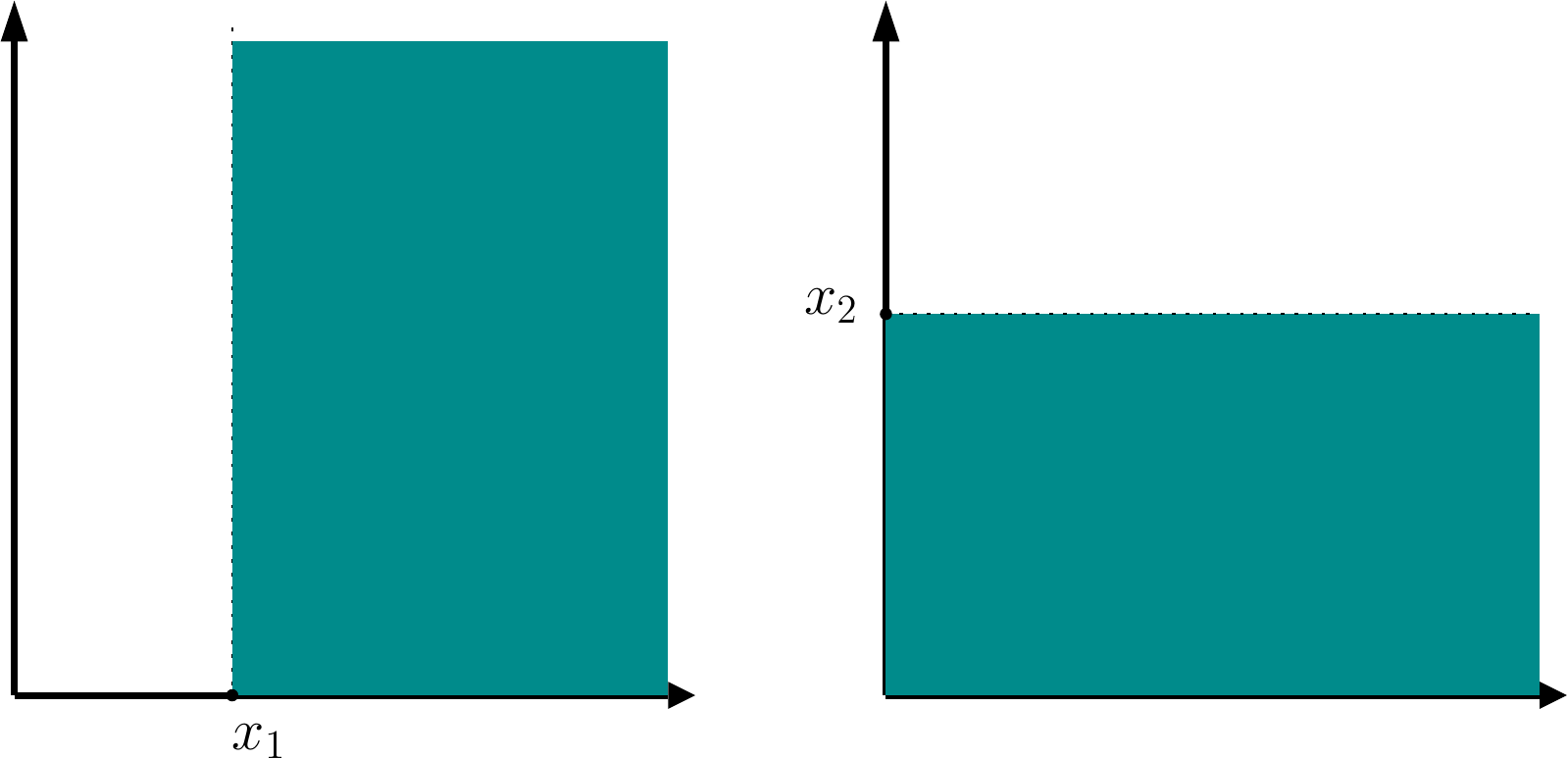}
\caption{The prime principal ideals of the algebra of lifetimes $\HH$, where it is clear the existence of two distinct behaviours.}
\label{compfilt}
\end{figure}   

The \textbf{category of locales} is determined by complete Heyting algebras (as objects) and morphisms between them preserving finite $\wedge$ and arbitrary $\vee$. 
A \textbf{sober space} is a topological space $X$ such that every irreducible closed subset of $X$ has a unique point $P$ whose closure is all of $X$. 
A locale $\LL$ is \textbf{spatial} (or \textbf{topological}) if each element of $\LL$ can be expressed as a meet of prime elements.

\begin{proposition}\label{spaciality}
The algebra of lifetimes, $\HH$, is a spatial locale.
\end{proposition}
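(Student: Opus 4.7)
The plan is to exhibit, for each element $a=(a_1,a_2)\in\HH$, an explicit representation as a meet of two prime elements, using the classification of prime principal ideals established just before the proposition.

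First, I would recall that by the immediately preceding proposition, the prime principal ideals of $\HH$ are precisely those of the form $\downarrow(x_1,\varepsilon_2)$ or $\downarrow(0,x_2)$. Translating back through the definition of a prime element as one whose downset is a principal prime ideal, the set of prime elements of $\HH$ is exactly
\[
P=\set{(x_1,\varepsilon_2)\mid 0\leq x_1\leq \varepsilon_1}\cup \set{(0,x_2)\mid 0\leq x_2\leq \varepsilon_2}.
\]

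Next, given any $a=(a_1,a_2)\in\HH$, I would propose the two candidates $p=(a_1,\varepsilon_2)$ and $q=(0,a_2)$, both of which lie in $P$. A direct computation using the meet formula $x\wedge y=(x_1\vee y_1,x_2\wedge y_2)$ established earlier yields
\[
p\wedge q=(a_1\vee 0,\varepsilon_2\wedge a_2)=(a_1,a_2)=a.
\]
Thus every element of $\HH$ is the meet of two prime elements, which is stronger than required; this proves spatiality.

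I do not expect any serious obstacle: the classification of prime elements has already done the heavy lifting in the previous proposition, and the remaining content is just a one-line verification that the two natural ``projections'' of $a$ onto the top edge and the left edge of the quarter plane are prime and reconstruct $a$ under meet. The only thing worth commenting on is that the representation is canonical and finite, so the conclusion actually gives a constructive factorisation rather than merely an existential meet of prime elements.
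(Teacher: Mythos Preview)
Your proposal is correct and follows essentially the same route as the paper: decompose an arbitrary $a=(a_1,a_2)$ as the meet of the two prime elements $(a_1,\varepsilon_2)$ and $(0,a_2)$ coming from the preceding classification of prime principal ideals. Your write-up is in fact cleaner than the paper's, which contains an apparent typo in naming the second prime factor.
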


\begin{proof}
Any element $P$ of the algebra of lifetimes $\HH$ is determined by the prime ideals corresponding to its coordinates: considering $P_x=(0,x)$ and $P_y=(y,0)$, $P=P_x\wedge P_y$ where the prime principal ideals correspondent to $P_x$ and $P_y$ are $\downarrow (0,x)$ and $\downarrow (y,0)$, respectively, represented in Figure \ref{compfilt}.   
\end{proof}

A \emph{lattice duality} describes the categorical equivalence between the category of topological spaces that are sober with continuous functions, and the category of locales that are spatial with appropriate homomorphisms (cf. \cite{Joh86}).
The dual space correspondent to the locale $\HH$ is made out of all its prime elements, i.e., all the elements in $\HH$ that determine principal prime ideals: $(0,x_2)$ and $(x_1,\varepsilon_2)$, for all $(x_1,x_2)\in \HH$. 
A basis of opens for the topology of that dual space is given by the vertical and horizontal filters $\uparrow (0,x_2)$ and $\uparrow (x_1,\varepsilon_2)$.
Each filter, horizontal or vertical, must contain $\top$ to give a sober space. 
The open sets are exactly the unions of these filters with different behaviours and thus we get a direct sum topology.
This is exhibited in Figure \ref{fig_alex}.

\begin{figure}
\includegraphics[page=1,width=0.3\textwidth]{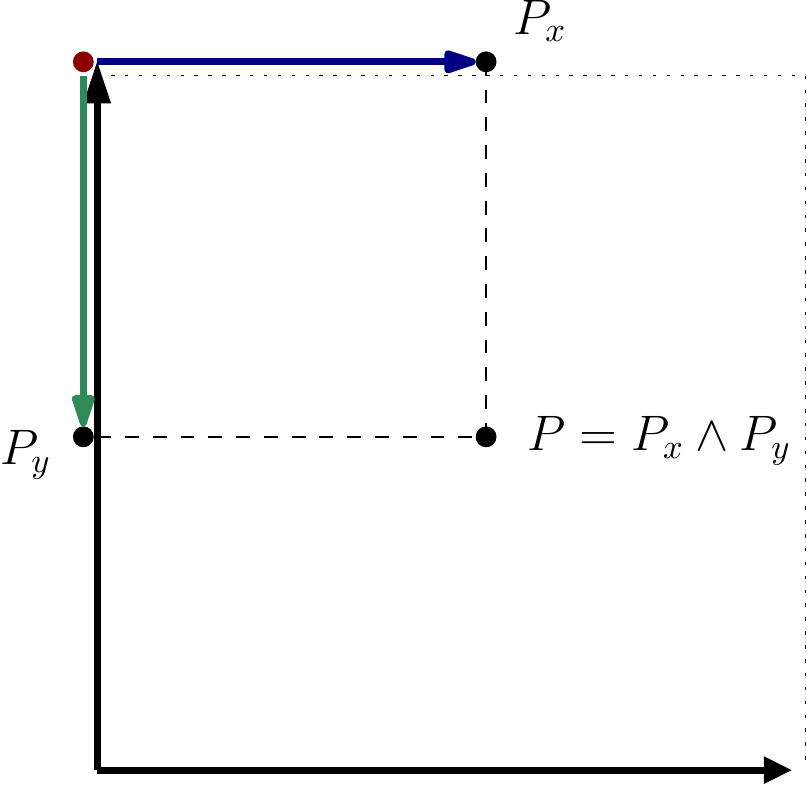}
\caption{An open of the dual space as the union of two filters $\uparrow P_x$ and $\uparrow P_y$ intersecting only in $\top$ and the point in the lattice it corresponds to.}
\label{fig_alex}
\end{figure}

The topological space described considers as points, birth and death time instances such as points $(t,t)$ in the diagonal, corresponding to an instance of time $t$ in some lifetime (open set) $(x,y)$ in $\HH$. 
This is in concordance with the fact that a real $t$ in $[0,\epsilon]$ is a point in the locale $([0,\epsilon];\wedge,\vee)$, and that $\HH=[0,\epsilon]^{op}Ê\times [0,\epsilon]$.
That provides us with the concept of point of the locale $\HH$ corresponding to a prime element in $\HH$ as earlier described.
The spaciality given by the proof of Proposition \ref{spaciality} is based on the recover of every element of the lattice $\HH$ as a lifetime corresponding to an interval $[t_1,t_2]$ with a birth time $t_1$ and a death time $t_2$.
Both of these instances are given by the corresponding open determined by both filters $\uparrow (t_1,\varepsilon_2)$ and $\uparrow (0,t_2)$. 

The discussed duality can provide more efficient techniques to solve problems living in the algebra of lifetimes $\HH$, by dealing with them in the dual space and transferring the solutions back to the lattice.
Such techniques and their implementation are subject of future work.  


\section{From local to global}
\label{sec:localglobal}

In the following section we will discuss the construction of sheaves over the algebra $\HH$, and will have a further look at the correspondent Grothendieck topos that will determine a framework of sets with a lifetime.
Recall that the barcode (or the persistence diagram) for a persistence module encodes the basis elements of the persistence module as pairs $(b,d)$ of a birth point and a death point of the basis element. 
This boils down to a persistence diagram or barcode being a multi-set of pairs of real numbers (or whatever the time set happens to be). 
Each element of the multi-set corresponds to a basis element of the module.
One of the ways this is being used is to say that for a given time point $x$, we can determine the local Betti number at that time by counting the number of points $(b,d)$ in the multi-set such that $b\leq x\leq d$. This can be visualised either as counting points in a quadrant or as counting bars intersecting a vertical line. There is nothing that keeps us from doing this for longer spans of query time intervals -- we can ask for points $(b,d)$ such that $b\leq x \leq y\leq d$ for some interval $(x,y)$. This produces Betti numbers that persist for \emph{at least} the time period $(x,y)$.
Now, the persistence Heyting algebra $\HH$ has as its elements intervals $(b,d)$, ordered by inclusion and with a Heyting algebra structure built according to the constructions in this text. Any actual barcode can be considered as a sheaf $\phi$ over this Heyting algebra $\HH$ such that $\phi(x,y)$ is the collection of basis elements in the persistence module that exist at least in the entire interval $[x,y]$.
In this setting, sheaves over $\HH$ encode (extended) barcodes or persistence diagrams. 
These sheaves can be considered as sets where each element is visible only over some time(-like) intervals.

%
%
The category of sheaves over $\HH$, denoted by $[\HH^{op},Set]$, is defined as follows:
\begin{itemize}
\item[(i)] Every object of this category is a sheaf $F:\HH^{op} \rightarrow Set$, and every morphism between two functors $F,G\in [\HH^{op},Set]$ is a natural transformation $\eta:F\rightarrow G$.
\item[(ii)] As these functors are contravariant, $\eta$ is an assignment to every object $x \in \HH$ of a morphism $\eta_x:F(x)\rightarrow G(x)$ in $Set$ (usually called the \textbf{component} of $\eta$ at $x$) such that for any poset arrow $f:x \to y$ in $\HH^{op}$, the following diagram commutes in $Set$:
\begin{center}
\begin{tikzpicture}[scale=.6]
\node (a) at (0,4) {$F(x)$};
\node (b) at (0,0) {$G(x)$};
\node (c) at (4,4) {$F(y)$};
\node (d) at (4,0) {$G(y)$};
\draw[arrows=-latex'] (a) -- (b) node[pos=.5,left] {$\eta_x$};
\draw[arrows=-latex'] (c) -- (d) node[pos=.5,right] {$\eta_y$};
\draw[arrows=-latex'] (c) -- (a) node[pos=.5,above] {$F(f)$};
\draw[arrows=-latex'] (d) -- (b) node[pos=.5,below] {$G(f)$};
\end{tikzpicture}
\end{center}
\end{itemize}

\begin{proposition}\label{consopen}
Consider the presheaf $\phi : \HH^{op} \rightarrow Set$ defined by the sections $\phi (x)=\downarrow x$, for all $x\in \HH$, and the restriction map $\chi_y^x:\phi(x)\rightarrow \phi (y) $ defined by $\chi_y^x (z)=z\wedge y$ for all $x,y,z\in \HH$ such that $y\leq x$. Then $\phi$ is a sheaf of sets over $\HH$.
\end{proposition}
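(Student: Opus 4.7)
The plan is to verify the three conditions directly from the algebraic structure of $\HH$, relying crucially on the infinite distributive law established in Proposition \ref{infdist}.

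First I would verify that $\phi$ is a well-defined presheaf. The restriction map $\chi_y^x$ sends $\downarrow x$ into $\downarrow y$ because $z \wedge y \leq y$. Functoriality is immediate: for $w \leq y \leq x$ and $z \in \downarrow x$, we have $\chi_w^y(\chi_y^x(z)) = (z \wedge y) \wedge w = z \wedge w = \chi_w^x(z)$, using $w \leq y$ so that $y \wedge w = w$; and the identity restriction $\chi_x^x(z) = z \wedge x = z$ holds since $z \leq x$.

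Next I would establish locality. Suppose $x = \bigvee_{i \in I} x_i$ and $s, t \in \downarrow x$ satisfy $s \wedge x_i = t \wedge x_i$ for every $i$. Applying the infinite distributive law from Proposition \ref{infdist}, I compute
\[
s = s \wedge x = s \wedge \bigvee_{i \in I} x_i = \bigvee_{i \in I}(s \wedge x_i) = \bigvee_{i \in I}(t \wedge x_i) = t \wedge x = t,
\]
which gives the required uniqueness (separated presheaf) condition.

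The main work is in gluing. Given a compatible family $\{s_i \in \downarrow x_i\}_{i \in I}$, meaning $s_i \wedge (x_i \wedge x_j) = s_j \wedge (x_i \wedge x_j)$ for all $i,j$, I would define the glued section as $s := \bigvee_{i \in I} s_i$. Since each $s_i \leq x_i \leq x$, we have $s \leq x$, so $s \in \phi(x)$. The key calculation is to verify that $\chi_{x_i}^x(s) = s_i$ for each $i$. By infinite distributivity,
\[
s \wedge x_i = \bigvee_{j \in I}(s_j \wedge x_i).
\]
For each $j$, using $s_j \leq x_j$ and the compatibility condition,
\[
s_j \wedge x_i = s_j \wedge x_j \wedge x_i = s_i \wedge x_i \wedge x_j \leq s_i,
\]
while the $j = i$ term yields $s_i \wedge x_i = s_i$ exactly. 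Therefore $\bigvee_{j \in I}(s_j \wedge x_i) = s_i$, as required. Uniqueness of the gluing follows from locality already verified. The subtle point — and the only place where I expect any delicacy — is precisely the step $s_j \wedge x_i \leq s_i$, which leverages both the compatibility hypothesis and the idempotency $s_j \wedge x_j = s_j$; this is what makes the join collapse to exactly $s_i$ rather than something merely above it.
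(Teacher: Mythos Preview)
Your proof is correct and follows essentially the same approach as the paper: both verify separation via the infinite distributive law, define the glued section as $s=\bigvee_i s_i$, and check $s\wedge x_i=s_i$ using compatibility together with $s_j\wedge x_j=s_j$. The only cosmetic difference is that the paper rewrites $\bigvee_j(s_j\wedge x_i)=\bigvee_j(s_i\wedge x_j)=s_i\wedge x=s_i$ via a second use of distributivity, whereas you bound each term by $s_i$ and note the $j=i$ term attains it; and you additionally spell out the presheaf axioms, which the paper takes for granted.
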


\begin{proof}
It is clear that, in general, $z_i\wedge x_i\wedge x_j = z_i\wedge x_j$ so that the compatibility condition reduces to $z_i\wedge x_j=z_j\wedge x_i$.
Let us now show that $\phi$ is a separated presheaf.
Given $x=\bigvee_{i\in I} x_i$ in $\HH$ and $z,y\in \downarrow x$, the infinite distributivity law and the identity $z\wedge x_i=y\wedge x_i$ together imply that
\[
z=z\wedge x=z\wedge (\bigvee_{i\in I} x_i)=\bigvee_{i\in I} (z\wedge x_i)=\bigvee_{i\in I} (y\wedge x_i)=y.
\]   
We shall now see that $\phi$ constitutes a sheaf: given $x=\bigvee_{i\in I} x_i$ in $\HH$ and a compatible family $\set{z_i}_{i\in I}$ such that 
\[
z_i\wedge x_j=z_i\wedge x_i\wedge x_j=z_j\wedge x_i\wedge x_j=z_j\wedge x_i,
\]
put $z=\bigvee_{i\in I} z_i$ and notice that 
\[
z=\bigvee_{i\in I} z_i \leq \bigvee_{i\in I} x_i=x, \text{  i.e.,  } z\in \downarrow x.
\]
Fixing $i\in I$,
\[
z\wedge x_i  = (\bigvee_{j\in J} z_j)\wedge x_i = \bigvee_{j\in J} (z_j\wedge x_i) = \bigvee_{j\in J} z_i\wedge x_j= z_i\wedge (\bigvee_{j\in J} x_j)\nonumber = z_i \wedge x= z_i. \nonumber
\]
Hence, $z_{\mid x_i} = z_i$, as required.
\end{proof}

\begin{proposition}\label{toposcons1} 
The category of sheaves $[\HH^{op},Set]$ is a topos, with subobject classifier determined by the set of all sieves on $x$, that is,
\[
\Omega (x) =  \downarrow x \text{,  for all  } x\in \HH,
\]
with restriction maps $\downarrow x \to \downarrow y$ given by $z\mapsto z\wedge y$, for all $x,y\in \HH$ such that $y\leq x$. 
\end{proposition}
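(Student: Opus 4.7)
The plan is to verify the three defining properties of a topos — finite limits, Cartesian closure, and a subobject classifier — by exploiting the fact (Proposition \ref{infdist}) that $\HH$ is a complete Heyting algebra. Much of this is an application of the general theorem, recorded in \cite{Joh02} and \cite{Mac92}, that sheaves on a locale form a Grothendieck topos; the main task is to exhibit the concrete form of the subobject classifier claimed in the statement.

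For finite limits, I would observe that in the ambient presheaf category limits are computed pointwise in $Set$, and that the Locality and Gluing conditions survive pointwise products and equalisers: a family of sections of a product is compatible if and only if each coordinate family is compatible, so its unique gluing is obtained coordinatewise. For exponentials, the standard formula $G^{F}(x)=\mathrm{Nat}(y(x)\times F, G)$, with $y(x)=\mathrm{Hom}_{\HH}(-,x)$ the representable (which at $z$ is a singleton if $z\leq x$ and empty otherwise), yields a sheaf thanks to the sheafiness of $G$, and satisfies the usual currying adjunction with the product.

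The crux is identifying $\Omega$. By Proposition \ref{consopen}, $\phi(x)=\downarrow x$ with restrictions $z\mapsto z\wedge y$ is already a sheaf, so the underlying data is in hand. I would define the universal mono $\mathrm{true}:1\to\Omega$ componentwise by sending the unique element of $1(x)=\{*\}$ to $x\in\downarrow x$, and for any subsheaf $i:U\hookrightarrow F$ define the characteristic natural transformation $\chi_U:F\to\Omega$ by
\[
\chi_{U,x}(s) \;=\; \bigvee\bigl\{\, y\leq x \,:\, s|_{y}\in U(y) \,\bigr\}.
\]

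The main technical obstacle is verifying that this supremum is attained, i.e.\ that $s|_{\chi_{U,x}(s)}$ genuinely lies in $U(\chi_{U,x}(s))$: this is where the sheaf condition on $U$ enters, combined with the infinite distributive law of Proposition \ref{infdist} to turn the indexed collection of restrictions into a compatible family that glues inside $U$. Granted this, naturality of $\chi_U$ reduces to the identity $\chi_{U,x}(s)\wedge y=\chi_{U,y}(s|_{y})$, a direct calculation with the lattice operations, and the pullback square is witnessed by the equivalence $s\in i_x(U(x)) \Leftrightarrow \chi_{U,x}(s)=x$; uniqueness of $\chi_U$ follows because any classifying map must send $s$ to the largest element at which $s$ restricts into $U$. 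This identifies $[\HH^{op},Set]$ as a topos with the subobject classifier in the claimed form.
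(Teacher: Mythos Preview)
Your proposal is correct and follows essentially the same overall architecture as the paper's proof: establish finite limits and exponentials in the sheaf category, then exhibit $\Omega$ together with $\mathrm{true}$ and the classifying map. There is, however, a small but genuine difference in how the classifying map is handled. The paper defines $\chi_j(x)(e)$ as \emph{the sieve of arrows into $x$ that take $e$ back into $U$} --- the generic presheaf-topos recipe --- and then simply asserts the pullback square, without ever passing from that sieve to a single element of $\downarrow x$ or checking the square. You instead write the classifying map directly as a supremum in $\HH$, and you correctly isolate the one non-formal point: that this supremum is actually attained, i.e.\ that $s|_{\chi_{U,x}(s)}\in U(\chi_{U,x}(s))$, which requires the sheaf condition on $U$ together with the infinite distributive law. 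This is exactly the step that reconciles ``sieves on $x$'' with the claimed description $\Omega(x)=\downarrow x$, and the paper's proof leaves it implicit. Your treatment is therefore the more complete of the two, and better matched to the specific form of $\Omega$ asserted in the statement.
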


\begin{proof}
Let us notice that for $x\in \HH$, $\downarrow x$ corresponds to all the topological features that live while a feature with lifetime $x$ is alive.
Another presheaf that can be considered on $\HH$ is determined by $\phi (x)=\uparrow x$, for all $x\in \HH$, and by the restriction map $\chi_y^x:\phi(x)\rightarrow \phi (y) $ defined as $\chi_y^x (z)=z\vee x$ for all $y,x,z\in \HH$ such that $y\leq x$.
$\chi_y^x$ can be seen as the inclusion of $\phi(x)$ in the larger $\phi(y)$, corresponding to the example of the \emph{time sheaf of states of knowledge} described in \cite{Bar95}.
Recall that, in the poset category $\HH$, $\top$ is the terminal object. 
Now observe that the category $[\HH^{op},Set]$ also has terminal object: the constant functor which sends every element of $\HH$ to $\{ \top \}$ is a terminal object $\top$. 
It also has finite limits and exponentials due to the fact that the Yoneda embedding $y:\HH \rightarrow [\HH^{op},Set]$ preserves all products and exponentials in $\HH$, and the fact that limits and exponentials exist in any Heyting algebra (corresponding to the meet and arrow operations).
While the terminal object of $[\HH^{op},Set]$ is a constant sheaf, all exponential objects in $[\HH^{op},Set]$ constitute sheaves themselves.
And since limits commute with limits and every sheaf can be seen as an equaliser, the limits of sheaves are also sheaves (cf. \cite{Mac92}).  
Now, to show that $[\HH^{op},Set]$ is a topos we just need to show that $\Omega$ as defined in is the subobject classifier.
Observe that $\Omega (x)$ is the set of all (poset) arrows into a given $x\in \HH$. 
Consider the morphism between the terminal object $\top \in [\HH^{op},Set]$ and $\Omega\in [\HH^{op},Set]$, i.e., a natural transformation $t:\top \rightarrow \Omega$, that takes the maximal sieve for each $x\in \HH$, sending the point of $\top$ to the maximum element $x$ of $\downarrow x$. 
Now let $j:U\rightarrow E$ be a given monomorphism and take the morphism $\chi_j:E\rightarrow \Omega$ defined by $\chi_j(x)(e)$ to be the sieve of arrows into $x\in \HH$ that take $e\in E(x)$ back into the subobject $U$. 
Then $\chi_j$ ensures that the following commutative diagram is a pullback.
\begin{center}
\begin{tikzpicture}[scale=.3]
\node (a) at (0,4) {$U$};
\node (b) at (0,0) {$E$};
\node (c) at (4,4) {$\top$};
\node (d) at (4,0) {$\Omega$};
\draw[arrows=-latex', dashed] (a) -- (b) node[pos=.5,left] {$j$};
\draw[arrows=-latex'] (c) -- (d) node[pos=.5,right] {$t$};
\draw[arrows=-latex', dashed] (a) -- (c) node[pos=.5,above] {};
\draw[arrows=-latex'] (b) -- (d) node[pos=.5,below] {$\chi_j$};
\end{tikzpicture}
\end{center}
\end{proof}

The result above in Corollary \ref{toposcons1} provides us with enough structure to think of the appropriate model for persistence.
In that perspective, sets vary within a local section correspondent to an interval of time, while global section consider such sets and their variations correspondent to the totality of that lifetime.
In that sense, given an element $F$ of the topos $[\HH^{op},Set]$ and a lifetime $t=(t_1,t_2)\in \HH$, the set $F(t)$ corresponds to a family of sets alive during $[t_1,t_2]$. 
To the sheaves of sets over $\HH$ we call \textbf{$\HH$-sets}, and to the category of $\HH$-sets and natural transformations between them $[\HH^{op},Set]$ we shall call the \textbf{persistence topos}.


%
Skew distributive lattices are noncommutative generalisations of distributive lattices, studied in \cite{LKC13}. %
In detail, a \textbf{skew lattice} $(S;\wedge, \vee)$ is constituted by a set $S$ and associative binary operations $\wedge $ and $\vee $ satisfying the absorption identities $x=x\wedge (x\vee y)$ and $x=(y\wedge x)\vee x$ and their respective duals. 
The distributive versions of these algebras, named \textbf{strongly distributive} skew lattices, are the ones satisfying the identities $x\wedge (y\vee z)=(x\wedge y)\vee (x\wedge z)$ and $(x\vee y)\wedge z=(x\wedge z)\vee (y\wedge z)$. 
An example of such algebras are partial maps with skew lattice operations defined by
\[
f\wedge g=g_{\mid \dom f\cap \dom g} \text{  and  } f\vee g=f\cup g_{\mid \dom f - \dom g}.
\]
Furthermore, the quotient of this skew lattice by the congruence defined by the equality of domains (ie, $f\DD g$ if $\dom f = \dom G$) is a distributive lattice.

A partial order can be defined in a skew lattice by $x\leq y$ if $x=x\wedge y=y\wedge x$ (or, equivalently, $y=x\vee y=y\vee x$).
The order structure is relevant when considering the classical Priestley duality. A more general local Priestley duality has been established in \cite{CD83}, between distributive lattices with zero and ordered compact topological spaces $(\X, \tau, \leq)$ with a basis consisting of $\tau$-compact open downsets and such that, for all $x,y\in X$ such that $x\nleq y$ there exist disjoint clones $U\in \uparrow \tau$ and $V\in \downarrow \tau$ such that $x\in U$ and $y\in V$, called \textbf{local Priestley spaces}. A map in the context of that duality is any continuous order preserving map $g:X\rightarrow y$ such that $g^{-1}(Z)$ is $\tau _X$-compact, for all $Z$ $\tau _Y$-compact. 
Moreover, a duality between sheaves over local Priestley spaces and strongly distributive skew lattices was recently established in \cite{Bau13}, assigning an important role to strongly distributive skew lattices.
A refinement of the Priestley duality (and, consequently, also of local Priestley duality) was introduced in \cite{Pul88} showing that the complete Heyting algebra $\HH$ is dual to a particular ordered topological space $(\X,\tau,\leq)$ that is a totally order disconnected space for which $(X,\downarrow \tau)$ has a basis consisting of $\tau$-compact open downsets (i.e., a local Priestley space) satisfying that $U\in \downarrow \tau$ implies $cl(U)\in \downarrow \tau$. 
Consider a sheaf $\Gamma: \X^{op} \to Set$ given by $\OO_a \mapsto \Gamma_{\OO_a}$ and $\OO_a\subseteq \OO_b \mapsto \chi: \Gamma_{\OO_b}\mapsto \Gamma_{\OO_a}$.
Now, due to the duality between local Priestley spaces and strongly distributive skew lattices implies that $\Gamma$ is a distributive skew algebra of sections with the operations $\wedge$ and $\vee$ as defined above.
An investigation into these noncommutative algebras will yield additional insights into the persistence topos, specifically on the computations with the sections of the sheaves constituting that topos.



\section{Homology on variable sets}
\label{sec:homology}

A topos theoretic generalisation of the category of sets to the category of sets with lifetimes permits to compute homology on the underlying sets varying according to time intervals, providing tools for the unification of different flavors of persistence (we have highlighted standard, multidimensional and zigzag persistence). 
%
%
The idea of sets that change over time is one of the main building blocks of this research where we look at a topos as a category of Heyting algebra valued sets where the information of $\HH$ is encoded.
Homology will not be done directly on the topos $[\HH^{op},Set]:=Set^{\HH^{op}}$. 
For these matters we consider the composition of functors $\HH^{op}\to Top\to Vect$ where $Top$ is the category of topological spaces and homeomorphisms, and $Vect$ is the category of vector spaces and linear maps.
This is a refinement of the ideas from \cite{Bu13} that present a functor $Pos \to Top \to Vect$ where $Pos$ is a partially ordered set seen as a small thin category for which the objects are its elements and the unique morphism between two elements, when existing, is provided by the order structure in the sense that $x\mapsto y$ iff $x\leq y$. 
The site $\HH$ has enough structure to be the right model for the starting poset category, being general enough to comprehend all the index sets used in persistence. 

For computational purposes we do not use the general features of $Top$ but rather a more combinatorial flavoured category as the category of simplicial sets $Simp$ and the composition of functors $\HH^{op}\to Simp\to Vect$
In fact, we shall use a weaker (more computable) version of the category of simplicial sets - the category of semi-simplicial sets - but for now let us continue this discussion considering the simplicial category, denoted by $\Delta$, for which the objects are the sets $[n]$ and the morphisms are order preserving maps. Let us also consider the category of presheaves $\Delta^{op}\to Set$ called \emph{simplicial sets}. 
The next step is to consider simplicial objects over the site of lifetimes $\HH$, given by functors 
\[
\Delta^{op}\to [\HH^{op},Set] \text{  or equivalently by  } \HH^{op}\to [\Delta^{op},Set] 
\]
that explicitly translate our idea of having the simplicial sets $[\Delta^{op},Set]$ varying according to $\HH$.
Moreover, the equivalent expression of these functors as $\Delta^{op}\to [\HH^{op},Set]$ permits us the intuition that we are constructing the simplicial sets over the topos $[\HH^{op},Set]$ that is providing the appropriate generalisation of the category $Set$.
When considering simplicial homology we can distinguish a functor $Simp \to Vect$. 
Thus, when extending $Set$ to the category of $\HH$-sets we get $\HH$-valued simplicial sets, denoted by $\HH Simp$, and the functor $\HH Simp\to Vect$.
These are $\HH$-valued sheaves on the category $Vect$ of vector spaces and linear maps.
Respectively we call the sheaves over $\HH$ in $Vect$ \textbf{$\HH$-spaces}.

%

We conclude this section with an example of the computation of persistent homology in a simplicial scenario using our developed tools.
Consider the following $n$-dimensional faces and corresponding lifetimes
\[ x (0,5) ; y (0,4) ; z (1,3) ; d (1,4) ; e (1,3) ; f (1,3) ; t (2,3) \]
together with the following face maps:
\[
\begin{array}{rcl}
d_0:t\rightarrow d & d_0:d\rightarrow x & d_1:d\rightarrow y \\
d_1:t\rightarrow e & d_0:e\rightarrow y & d_1:e\rightarrow z \\
d_2:t\rightarrow f & d_0:f\rightarrow z & d_1:f\rightarrow x 
\end{array}
\] 
This example can be illustrated as the following time varying complex: 
\begin{center}
\includegraphics[scale=0.4, page=1,width=0.8\textwidth]{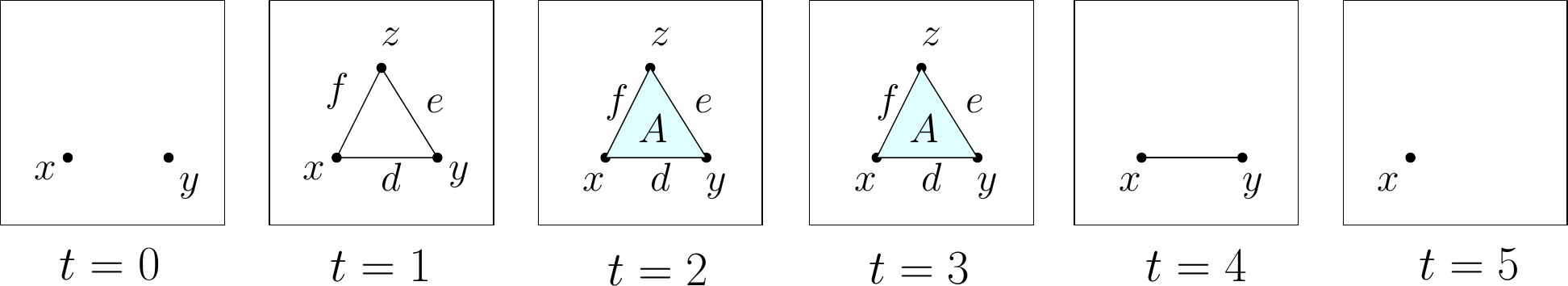}
\end{center}
Denote by $H_n(t)$ the homology group for dimension $n$ at time $t$.
For $t=0$, we get the complex $C_0=\langle x,y\rangle \rightarrow 0$ so that 
\[
H_0(0)=\langle x,y\rangle/0\cong \Z\oplus \Z
\]
while $H_1=0$.

For $t=1$ we consider the complex $C_1\rightarrow C_0\rightarrow 0$ so that 
\[
H_0(1)=\langle x,y,z\rangle/\langle x-y,y-z,z-x\rangle\cong \Z
\]
On the other hand, $\sigma(la+mb+nc)=l(y-x)+m(z-y)+n(x-z)$ so that this linear combination is zero if and only if $l=m=n$.
Thus, $Z_1=\langle a+b+c\rangle$ and therefore 
\[
H_1=\langle a+b+c\rangle/0\cong \Z
\]
For $t=2$ as well as for $t=3$ we consider the complex $C_2\rightarrow C_1\rightarrow C_0\rightarrow 0$ and we get that 
\[
H_0(2)=\langle x,y,z\rangle/\langle x-y,y-z,z-x\rangle\cong \Z \text{  and that  } H_1(2)=\langle a+b+c\rangle/\langle a+b+c\rangle\cong 0
\]
as $B_1=\sigma A=a+b+c$. Moreover, $H_2=0$.
For $t=4$ we have the complex $C_1\rightarrow C_0\rightarrow 0$ so that 
\[
H_0(4)=\langle x,y\rangle/\langle x-y\rangle\cong \Z
\]
while $H_1(4)\cong 0$ and $H_2(4)\cong 0$.
Finally, for $t=5$, $H_0(5)=\langle x\rangle/0\cong \Z$ and $H_2(5)\cong 0$.

Due to the sheaf structure we may glue the pointwise homology groups together consistently. 
Doing so, we obtain the following global information:
\begin{itemize}
\item for $t=[0,1[$ we have $H_0=\Z\oplus \Z$ changing to $H_0=\Z$ when $t=[1,5]$.
\item for $t=[1,2[$ we have $H_1=\Z$ changing to $H_1=0$ when $t=[0,1[\cup[2,5]$.
\end{itemize}

By computing homology over our time variable sets we are able to capture global information in a similar way to the barcodes in classical persistence.
In future work we will describe how to extract indecomposables from this information as well as explore algorithms for efficient concrete computation. 

%


\section*{Acknowledgements}
The production of this paper and correspondent research was positively influenced during the past year by the following researchers listed by alphabetical order:
Andrej Bauer,
Karin Cvetko-Vah,
Graham Ellis,
Maria Jo\~ao Gouveia,
Dejan Govc,
Ganna Kudryavtseva,
Primo\v z Moravec,
Jorge Picado.
To all of them we gratefully hold a word of appreciation. 
The authors would also like to acknowledge that this work was funded by the
EU Project TOPOSYS (FP7-ICT-318493-STREP).


\bibliographystyle{plain}


\end{document}